\theoremstyle{definition}
\newtheorem{theorem}{Theorem}[section]
\newtheorem*{theorem*}{Theorem}
\newtheorem{proposition}[theorem]{Proposition}
\newtheorem{lemma}[theorem]{Lemma}
\newtheorem{corollary}[theorem]{Corollary}
\newtheorem{definition}[theorem]{Definition}
\newtheorem{example}[theorem]{Example}
\theoremstyle{remark}
\newtheorem{remark}[theorem]{Remark}
\newcommand{\SvW}[1]{{#1}}
\newcommand{\rev}[1]{\textcolor{black}{#1}}
\newcommand{\finrev}[1]{\textcolor{black}{#1}}
\newcommand{\type}{\operatorname{type}}
\newcommand{\maxdeg}{\operatorname{maxdeg}}
\newcommand{\width}{\operatorname{maxlen}}
\newcommand{\Sym}{\ensuremath{\operatorname{Sym}}}
\journal{Advances in Applied Mathematics}
\begin{document}

\begin{frontmatter}



\title{Extended chromatic symmetric functions and equality of ribbon Schur functions}


\author[1]{Farid Aliniaeifard}
\ead{farid@math.ubc.ca}
\author[1]{Victor Wang}
\ead{victor.yz.wang@alumni.ubc.ca}
\author[1]{Stephanie van Willigenburg\corref{cor1}}
\cortext[cor1]{Corresponding author}
\ead{steph@math.ubc.ca}

\address[1]{
Department of Mathematics,
University of British Columbia,
Vancouver BC V6T 1Z2,
Canada}

\begin{abstract}
We prove a general inclusion-exclusion relation for the extended chromatic symmetric function of a weighted graph, which specializes to (extended) $k$-deletion, and we give two methods to obtain numerous new bases from weighted graphs for the algebra of symmetric functions. 

Moreover, we classify when two weighted paths have equal extended chromatic symmetric functions by proving this is equivalent to the classification of equal ribbon Schur functions. This latter classification is dependent on the operation composition of compositions, which we generalize to composition of graphs. We then apply our generalization to obtain infinitely many families of weighted graphs whose members have equal extended chromatic symmetric functions. 
\end{abstract}



\begin{keyword}
chromatic symmetric function \sep  deletion-contraction \sep path graph \sep ribbon Schur function \sep symmetric function

\MSC[2020] 05C15 \sep 05C25 \sep 05C38 \sep 05E05 \sep 16T30
\end{keyword}

\end{frontmatter}


\tableofcontents

\section{Introduction}\label{sec:intro}  
The chromatic polynomial of a graph $G$, denoted by $\chi_G$, was introduced in 1912 by Birkhoff as a tool for solving the 4-colour problem \cite{Birk}. Stanley generalized this in 1995 to the chromatic symmetric function, denoted by $X_G$ \cite{Stan95}, and since then it has been an active area of research, gaining particular prominence recently due to two avenues. One avenue of research is to answer the question of whether the chromatic symmetric function distinguishes nonisomorphic trees \cite[p. 170]{Stan95}. This has been confirmed for all trees with $\leq 29$ vertices by Heil and Ji \cite{HeilJi}, and for various infinite families such as spiders by Martin, Morin and Wagner \cite{MMW}, \rev{ proper caterpillars by Aliste-Prieto and Zamora \cite{Jose2}, and all caterpillars by Loebl and Sereni \cite{LS}.} The proof of \rev{Aliste-Prieto and Zamora's} result hinges on the classification of equal ribbon Schur functions by Billera, Thomas and van Willigenburg \cite{BTvW}, which also intriguingly arises in the proof by Borodin, Diaconis and Fulman that a stationary one-dependent process is invariant under time reversal \cite{BDF}. However, the main avenue of research is to prove the Stanley-Stembridge conjecture \cite[Conjecture 5.5]{StanStem}, which in terms of chromatic symmetric functions was given by Stanley \cite[Conjecture 5.1]{Stan95}: if a poset is $(\mathbf{3}+\mathbf{1})$-free then the chromatic symmetric function of its incomparability graph is a nonnegative linear combination of elementary symmetric functions.
Although the conjecture remains open, many cases have been proved, for example in \cite{ChoHuh, Dladders, GebSag, MM, HuhNamYoo}, and as a direction towards proving the conjecture\rev{,} a variety of generalizations of the chromatic symmetric function have been introduced and studied, such as to quasisymmetric functions by Shareshian and Wachs \cite{SW} and to symmetric functions in noncommuting variables by Gebhard and Sagan \cite{GebSag}.

Recently, a further generalization was introduced by Crew and Spirkl \cite{CS}, the extended chromatic symmetric function of a vertex-weighted graph, $(G,w)$, denoted by $X_{(G,w)}$. This function reduces to Stanley's chromatic symmetric function when the weight of every vertex is 1. Their  motivation was to provide a symmetric function that satisfied a deletion-contraction rule analogous to the famed deletion-contraction rule satisfied by the chromatic polynomial, but \emph{not} the chromatic symmetric function. Further to providing such a rule, they also generalized numerous results from $X_G$ to $X_{(G,w)}$. In our paper we build on their work by generalizing further results from $X_G$ to $X_{(G,w)}$, and investigate when two  {vertex-weighted} graphs have the same extended chromatic symmetric function. Remarkably, the equality of ribbon Schur functions again plays a crucial role. More precisely, our paper is structured as follows.

We cover the necessary background in Section~\ref{sec:prelim}. Then in Section~\ref{sec:incexc} we introduce  expansions of a weighted graph in Definition~\ref{def:expansion} and use them to prove an inclusion-exclusion relation for $X_{(G,w)}$ in Theorem~\ref{the:incexc}, which we  {also specialize to $\chi_G$} in Corollary~\ref{cor:aRb}.  {Our result generalizes the $k$-deletion relations for $X_{(G,w)}$ \cite{CS} and $X_G$ \cite{DvW, OS}.} In Section~\ref{sec:compcomp} we work towards classifying when two weighted paths have equal extended chromatic symmetric functions. We give our classification in Theorem~\ref{the:wpclass} that shows it is \rev{equivalent} to the classification of equal ribbon Schur functions \cite{BTvW}. The proof of our classification is reliant on a map $U$, given in \eqref{eq:Umap}, which maps the ribbon Schur function indexed by a composition $\alpha$ to the extended chromatic symmetric function of a path with naturally ordered vertex weights \rev{given by} $\alpha$. In Section~\ref{sec:bases} we give a formula for   extended chromatic symmetric functions in terms of power sum symmetric functions and a M\"{o}bius function in Proposition~\ref{prop:mup}, which naturally generalizes \cite[Theorem 2.6]{Stan95}. We then use this to generate multiplicative bases for the algebra of symmetric functions, $\Sym$, from extended chromatic symmetric functions  in Theorem~\ref{the:wbase}. In Theorem~\ref{the:wbas} we give a second way to generate bases for $\Sym$ from extended chromatic symmetric functions. In Section~\ref{sec:neatbases} we introduce chromatic reciprocity in Theorem~\ref{the:rec} and use it to prove simple change of basis formulae from the power sum symmetric functions to the bases of $\Sym$ generated by the classic chromatic symmetric functions of paths in Proposition~\ref{prop:ppwr} and stars in Proposition~\ref{prop:spwr}. Finally, in Section~\ref{sec:compgra} we define an operation to compose graphs, which allows us to generate nonisomorphic weighted graphs with equal extended chromatic symmetric functions in Theorem~\ref{the:eq}.

\section{Preliminaries}\label{sec:prelim}
A \textit{composition} $\alpha = (\alpha_1,\dots,\alpha_{\ell(\alpha)})$ is a (possibly empty) finite ordered list of positive integers, where $\ell(\alpha)$ is the \textit{length} of $\alpha$. We call the integers the \textit{parts} of the composition. When $\alpha_{j+1}=\cdots=\alpha_{j+m}=i$, we often abbreviate this sublist to $i^m$. The \textit{size}  of $\alpha$ is defined to be $|\alpha|=\alpha_1+\cdots+\alpha_{\ell(\alpha)}$. If $|\alpha|=n$, we say that $\alpha$ is a composition of $n$ and write $\alpha\vDash n$.

Let $[n]=\{1,\dots,n\}$. If $\alpha= (\alpha_1,\dots,\alpha_{\ell(\alpha)})\vDash n$, then we define $\text{set}(\alpha)=\{\alpha_1, \rev{\alpha_1 +\alpha _2,} \dots,\alpha_1+\cdots+\alpha_{\ell(\alpha)-1}\}\subseteq [n-1]$. This induces a natural one-to-one correspondence between the compositions of $n$ and the subsets of $[n-1]$. Define the \textit{complement} of $\alpha$, denoted by $\alpha^c$, to be the unique composition of $n$ satisfying $\text{set}(\alpha^c)=[n-1]-\text{set}(\alpha)$. The \textit{reversal} of $\alpha$ is the composition $\alpha^r=(\alpha_{\ell(\alpha)},\dots,\alpha_1)$.

A \textit{partition} $\lambda = (\lambda_1,\dots,\lambda_{\ell(\lambda)})$ is a composition with entries satisfying $\lambda_1\ge\cdots\ge\lambda_{\ell(\lambda)}$. If $|\lambda|=n$, then we say that $\lambda$ is a partition of $n$ and write $\lambda\vdash n$. The \textit{underlying partition} of a composition $\alpha$, denoted by $\widetilde\alpha$, is the composition obtained by sorting the parts of $\alpha$ into weakly decreasing order. 

Given two compositions $\alpha=(\alpha_1,\dots,\alpha_{\ell(\alpha)})$ and $\beta=(\beta_1,\dots,\beta_{\ell(\beta)})$, the \textit{concatenation} of $\alpha$ and $\beta$ is $\alpha\cdot\beta = (\alpha_1,\dots,\alpha_{\ell(\alpha)},\beta_1,\dots,\beta_{\ell(\beta)})$, while their \textit{near concatenation} is $\alpha\odot\beta = (\alpha_1,\dots,\alpha_{\ell(\alpha)}+\beta_1,\dots,\beta_{\ell(\beta)})$. If $\ell(\alpha)=\ell(\beta)=\ell$ and $\alpha_1\ge\beta_1,\dots,\alpha_\ell\ge\beta_\ell$, then we say that $\alpha$ \textit{contains} $\beta$, and write $\alpha \supseteq \beta$. Finally, $\alpha$ is a \textit{coarsening} of $\beta$  (or equivalently $\beta$ is a \textit{refinement} of $\alpha$), denoted by $\alpha \succcurlyeq\beta$, if we can obtain the parts of $\alpha$ in order by adding together adjacent parts of $\beta$ in order. If $\alpha,\beta$ are compositions of the same size, $\alpha\succcurlyeq\beta$ (or equivalently, $\alpha^c\preccurlyeq \beta^c$) if and only if $\text{set}(\beta)\supseteq\text{set}(\alpha)$.

We next turn our attention to $\Sym$, the \textit{algebra of symmetric functions}. We can define $\Sym$ as a subalgebra of $\mathbb{Q}[[x_1,x_2,\dots]]$ as follows. The \textit{$i$th elementary symmetric function $e_i$} for $i\ge 1$ is defined to be
\begin{equation*}
    e_i=\sum_{j_1<\cdots<j_i}x_{j_1}\cdots x_{j_i}.
\end{equation*}
Given a partition $\lambda=(\lambda_1,\dots,\lambda_{\ell(\lambda)})$, we define the \textit{elementary symmetric function $e_\lambda$} to be
\begin{equation*}
    e_\lambda=\prod_{i=1}^{\ell(\lambda)} e_{\lambda_i}
\end{equation*}
while taking the convention that the empty product evaluates to $1$. $\Sym$ can be defined as the graded algebra
\begin{equation*}
    \Sym=\Sym^0\oplus \Sym^1\oplus\cdots
\end{equation*}
where for each $n\ge 0$, $\Sym^n$ is spanned by the basis $\{e_\lambda\}_{\lambda\vdash n}$. Thus the family $\{e_\lambda\}_{\lambda \vdash n\ge 0}$ forms a basis for $\Sym$. 

A basis $\{b_\lambda\}_{\lambda\vdash n\ge 0}$ of $\Sym$ indexed by   partitions $\lambda$ is \textit{multiplicative} if for each $n\ge 0$ we have $\Sym^n=\text{span}\{b_\lambda\}_{\lambda\vdash n}$, and for every partition $\lambda=(\lambda_1,\dots,\lambda_{\ell(\lambda)})$,
\begin{equation*}
    b_\lambda = \prod_{i=1}^{\ell(\lambda)} b_{\lambda_i}.
\end{equation*}

As such, we can describe a multiplicative basis $\{b_\lambda\}_{\lambda\vdash n\ge 0}$ by simply giving the formulae for each $b_i$ for $i\ge 1$. Aside from the basis of elementary symmetric functions, there are two other important multiplicative bases for $\Sym$.

The first is the basis of \textit{complete homogeneous symmetric functions} $\{h_\lambda\}_{\lambda\vdash n\ge 0}$, which is the multiplicative basis given by
\begin{equation*}
   h_i=\sum_{j_1\le \cdots\le j_i}x_{j_1}\cdots x_{j_i}.
\end{equation*}

The other is the basis of \textit{power sum symmetric functions} $\{p_\lambda\}_{\lambda\vdash n\ge 0}$, which is the multiplicative basis given by
\begin{equation*}
     p_i=\sum_{j}x_j^i.
\end{equation*}

Another class of symmetric functions that we will be interested in are the \textit{ribbon Schur functions}, indexed by compositions, which can be defined in terms of the complete homogeneous symmetric functions via
\begin{equation*} 
    r_\alpha = \sum_{\beta \succcurlyeq \alpha} (-1)^{\ell(\alpha)-\ell(\beta)} h_{\widetilde \beta}.
\end{equation*}

$\Sym$ became an object of study in graph theory when Stanley introduced a symmetric function generalization of the chromatic polynomial of a graph.

\rev{Let $G$ be a graph  with finite vertex set $V(G)$ and finite multiset of edges $E(G)$. For $u,v\in V(G)$, we  write $uv$ to mean an edge connecting $u$ and $v$. We henceforth assume that all our graphs are  finite. A \textit{proper colouring} of $G$ is an assignment of colours to the vertices of $G$ such that no two vertices connected by an edge are given the same colour. Equivalently, a proper colouring is a map $\kappa: V(G)\to\mathbb{Z}^+$ such that $\kappa(u)\neq \kappa(v)$ when $u,v\in V(G)$ and $uv\in E(G)$. For $k\ge 0$, the function $\chi_G(k)$ denotes the number of proper colourings of $G$ using $k$ colours. It is perhaps a surprising result that $\chi_G(k)$ is polynomial in $k$; as such, $\chi_G$ is known as the \textit{chromatic polynomial}. }

 We permit our graphs to have \textit{loops} (edges connecting some vertex to itself) and possibly \textit{multiple edges} (two or more edges incident to the same pair of vertices); as we shall soon see, allowing loops and multiple edges in our graphs will still lead to interesting results.  A graph with no loops and no multiple edges is \textit{simple}. We will sometimes require our graphs to be \textit{labelled}, namely that the vertices of our graphs are assigned a canonical ordering $v_1,\dots,v_{|V(G)|}$. 

We will require familiarity with a few families of graphs, which we describe here. The \textit{path} $P_n$, $n\geq 1$, is the graph on $n$ vertices $v_1,\dots,v_n$ with edge set $\{v_iv_{i+1}\mid i\in[n-1]\}$, and the \textit{star} $S_n$, $n\geq 1$, is the graph on $n$ vertices $v_1,\dots,v_n$ with edge set $\{v_iv_n \mid i\in[n-1]\}$. When we refer to $P_n$ as a labelled graph, we will always adopt this labelling, which orders the vertices of $P_n$ as they appear along the path. The \emph{null graph} ${N_n}$, $n\geq 1$, is the graph on $n$ vertices with no edges.

Given two graphs $G$ and $H$, we write $G\cup H$ to mean their disjoint union. When $G$ and $H$ are labelled graphs with vertices ordered $a_1,\dots,a_n$ and $b_1,\dots,b_m$, respectively, we define the labelled graph $G\mid H$ to be the graph of their disjoint union on labelled vertices $v_1,\dots,v_{n+m}$ such that $v_i=a_i$ for $1\le i \le n$ and $v_{i}=b_{i-n}$ for $n+1\le i \le n+m$.

In 1995, Stanley generalized the chromatic polynomial of a graph $G$ by defining the chromatic symmetric function of $G$ as follows.
\begin{definition}\label{def:XG}
\cite[Definition 2.1]{Stan95}
Let $G$ be a graph with vertex set $\{v_1,\dots,v_n\}$. Then the \textit{chromatic symmetric function} of $G$ is defined to be
\begin{equation*}
    X_G = \sum_\kappa x_{\kappa(v_1)}\cdots x_{\kappa(v_n)}
\end{equation*}
where the sum is over all proper colourings $\kappa$ of $G$.
\end{definition}

The chromatic symmetric function $X_G$ of a graph $G$ specializes to $\chi_G(k)$ when evaluated at $x_i=1$ for $i\le k$ and $x_i=0$ for $i>k$.

In 2020, Crew and Spirkl introduced a natural extension of the chromatic symmetric function to a \textit{weighted graph} $(G,w)$ where $w:V(G)\to\mathbb{Z}^+$ describes the \textit{weight} of each vertex of $G$.

\begin{definition}\label{def:EXG}
\cite[Equation 1]{CS}
Let $(G,w)$ be a weighted graph with vertex set $\{v_1,\dots,v_N\}$ and weight function $w:V(G)\to\mathbb{Z}^+$. Then the \textit{extended chromatic symmetric function} of $(G,w)$ is defined to be
\begin{equation*}
    X_{(G,w)} = \sum_\kappa x^{w(v_1)}_{\kappa(v_1)}\cdots x_{\kappa(v_N)}^{w(v_N)}
\end{equation*}
where the sum is over all proper colourings $\kappa$ of $G$.
\end{definition}

We will investigate properties of the extended chromatic symmetric function while sometimes choosing to employ an alternative notation. If $G$ is a graph with vertex set $\{v_1,\dots,v_N\}$ and $\alpha$ is a composition of size $n$ and length $N$, we allow ourselves to write $(G,\alpha)$ to denote the weighted graph $(G,w)$ with weight function $w(v_i)=\alpha_i$. This of course implicitly assumes that $G$ is a labelled graph. When needed, we will describe this order explicitly, although we may choose to omit such a description when all possible labellings of the vertices, when combined with the weight composition $\alpha \vDash n$, produce the same weighted graph up to isomorphism. For example, if $\alpha = (1^n)$, then  $X_{(G,(1^n))}=X_G$, regardless of how we label its vertices.

\SvW{With this notation, we immediately have the following by definition.}

\SvW{\begin{proposition}\label{prop:wprod} Let $(G, \alpha)$ and $(H, \beta)$ be weighted graphs. Then
$$X_{(G \mid H, \alpha \cdot \beta)} = X_{(G, \alpha)} X_{(H, \beta)}.$$
\end{proposition}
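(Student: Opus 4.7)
The plan is to prove this by a direct unpacking of Definition~\ref{def:EXG}. First I would identify the proper colourings of $G \mid H$. Since, by construction, there are no edges in $G \mid H$ between vertices of $G$ and vertices of $H$, a map $\kappa : V(G \mid H) \to \mathbb{Z}^+$ is a proper colouring if and only if its restrictions $\kappa_G := \kappa|_{V(G)}$ and $\kappa_H := \kappa|_{V(H)}$ are proper colourings of $G$ and $H$ respectively. This gives a bijection between proper colourings of $G \mid H$ and pairs $(\kappa_G, \kappa_H)$ of proper colourings, because no edge of $G \mid H$ has one endpoint in $V(G)$ and the other in $V(H)$.

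Next I would track the weights under the labelling convention. Let $G$ have ordered vertices $a_1, \dots, a_n$ and $H$ have ordered vertices $b_1, \dots, b_m$, so that $G \mid H$ has ordered vertices $v_1, \dots, v_{n+m}$ with $v_i = a_i$ for $1 \le i \le n$ and $v_i = b_{i-n}$ for $n+1 \le i \le n+m$. The composition $\alpha \cdot \beta = (\alpha_1, \dots, \alpha_n, \beta_1, \dots, \beta_m)$ then assigns weight $\alpha_i$ to $v_i = a_i$ for $i \le n$ and weight $\beta_{i-n}$ to $v_i = b_{i-n}$ for $i > n$, matching the weights in $(G, \alpha)$ and $(H, \beta)$ respectively.

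Putting these two facts together, the monomial attached to any proper colouring $\kappa$ of $G \mid H$ factors as
\[
\prod_{i=1}^{n+m} x_{\kappa(v_i)}^{w(v_i)} \;=\; \Bigl(\prod_{i=1}^{n} x_{\kappa_G(a_i)}^{\alpha_i}\Bigr)\Bigl(\prod_{j=1}^{m} x_{\kappa_H(b_j)}^{\beta_j}\Bigr),
\]
and summing over proper colourings $\kappa$ decouples, via the bijection above, into a product of independent sums over $\kappa_G$ and $\kappa_H$. This yields $X_{(G \mid H, \alpha \cdot \beta)} = X_{(G, \alpha)} X_{(H, \beta)}$.

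There is no substantive obstacle here; the statement is essentially a bookkeeping consequence of the definitions. The only point requiring mild care is that the labelling of $G \mid H$ and the concatenation $\alpha \cdot \beta$ are compatible, which is guaranteed directly by how these operations are defined in the preliminaries.
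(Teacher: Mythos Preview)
Your proof is correct and is exactly the approach the paper takes: the paper simply states that the proposition holds ``by definition'' without writing out any details, and your argument is a careful unpacking of precisely that.
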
}

When drawing weighted graphs, we will write inside each node the weight of the vertex. When we want to emphasize the ordering on the vertices, we will choose to draw the vertices in order from left to right. We will need to discuss the relations between the graphs we draw; as such, we may enclose a graph drawing in square brackets as a shorthand notation for its extended chromatic symmetric function.

One of the main motivations of Crew and Spirkl for studying vertex-weighted graphs was to obtain a \textit{deletion-contraction rule}, which relates the extended chromatic symmetric function of a weighted graph to those of the weighted graphs obtained from deleting and contracting a fixed edge. 

To \textit{delete} an edge $\epsilon$ of a graph $G$ means to consider the graph $G-\epsilon=(V(G),E(G)-\{\epsilon\})$ resulting from removing $\epsilon$ from the edge multiset of $G$. If $S$ is a multiset of edges contained in $E(G)$, we  similarly use the notation $G-S$ to mean the graph $(V(G),E(G)-S)$. For any multiset of edges $S$ on the vertices of $G$, we let $G+S$ denote the graph $(V(G), E(G)+S)$, and we  write $G+\{\epsilon\}$ as $G+\epsilon$. 

To \textit{contract} an edge $\epsilon$ of $G$, we first delete it from $G$, and then construct the graph $G/\epsilon$ by formally identifying the endpoints of $\epsilon$ as the same vertex. When vertex weights are relevant, we will take the weight of the resulting vertex to be the sum of the weights of its constituents unless otherwise specified. This will be consistent with Proposition~\ref{prop:dc} below. Note that when $G$ is a labelled graph and $\epsilon$ connects two successively ordered vertices of $G$ or is a loop, there is a natural ordering on the vertices of $G/\epsilon$ inherited from $G$. In general, if we contract several edges of a labelled \finrev{graph} $G$ such that each resulting vertex is a combination of consecutively labelled vertices of $G$, we will assume that the new graph inherits the natural ordering on its vertices from $G$.

A deletion-contraction rule exists for the chromatic polynomial, but not for Stanley's original unweighted chromatic symmetric function. We reproduce the statement of the rule in the weighted case here in our alternative notation. Because of our composition notation for a weighted graph, our statement requires an additional condition on the ordering of the vertices, but this ultimately expresses the deletion-contraction rule in its full generality, as we can always relabel the vertices of $G$ to satisfy the required conditions.

\begin{proposition}[Deletion-contraction]
\cite[Lemma 2]{CS}
\label{prop:dc}
Let $(G,\alpha)$ be a weighted graph, where $\alpha=(\alpha_1,\dots,\alpha_{\ell(\alpha)})$ is a composition specifying the weights of the vertices $v_1,\dots,v_{\ell(\alpha)}$ of $G$. Let $\epsilon$ be either an edge connecting consecutively labelled vertices $v_i,v_{i+1}$ of $G$ or a loop. Write $\alpha/\epsilon=(\alpha_1,\dots,\alpha_i)\odot(\alpha_{i+1},\dots,\alpha_{\ell(\alpha)})$ in the first case, and $\alpha/\epsilon=\alpha$ if $\epsilon$ is a loop. Then 
\begin{equation*}
    X_{(G,\alpha)}=X_{(G-\epsilon,\alpha)}-X_{(G/\epsilon,\alpha/\epsilon)}.
\end{equation*}
\end{proposition}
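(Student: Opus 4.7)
The plan is to split the proof into two cases based on whether $\epsilon$ is a loop or an edge joining two distinct vertices, and in the latter case to partition the proper colourings of $G-\epsilon$ according to whether the two endpoints of $\epsilon$ receive the same colour.

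First I would dispose of the loop case. If $\epsilon$ is a loop at some vertex $v_i$, then any map $\kappa\colon V(G)\to\bZ^+$ must satisfy $\kappa(v_i)\neq \kappa(v_i)$ to be a proper colouring of $G$, which is impossible; hence $X_{(G,\alpha)}=0$. On the other hand, according to the definition of contraction in the excerpt, $G/\epsilon$ is formed by deleting $\epsilon$ and then identifying the endpoints, which are already identical, so $G/\epsilon = G-\epsilon$; combined with the convention $\alpha/\epsilon=\alpha$ for a loop, this gives $X_{(G-\epsilon,\alpha)} - X_{(G/\epsilon,\alpha/\epsilon)}=0$, matching the left-hand side.

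Next I would handle the main case, where $\epsilon = v_iv_{i+1}$ connects two consecutive vertices. Here the key observation is that a proper colouring of $G-\epsilon$ is either (a) a proper colouring $\kappa$ with $\kappa(v_i)\neq\kappa(v_{i+1})$, which is precisely a proper colouring of $G$, or (b) a proper colouring $\kappa$ with $\kappa(v_i)=\kappa(v_{i+1})$, which descends uniquely to a proper colouring $\kappa'$ of $G/\epsilon$ by assigning the merged vertex the common colour $\kappa(v_i)=\kappa(v_{i+1})$. This correspondence is a bijection in each case because every proper colouring of $G/\epsilon$ lifts back to a colouring of $G-\epsilon$ identifying the two endpoints.

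Finally I would check that the monomial weights match under these bijections. For colourings of type (a), the monomial $\prod_{j} x_{\kappa(v_j)}^{\alpha_j}$ in $X_{(G-\epsilon,\alpha)}$ coincides with the corresponding monomial in $X_{(G,\alpha)}$, since both sides use the weight composition $\alpha$. For colourings of type (b), with $c:=\kappa(v_i)=\kappa(v_{i+1})$, we use the identity
\[
x_c^{\alpha_i}\,x_c^{\alpha_{i+1}} = x_c^{\alpha_i+\alpha_{i+1}},
\]
so that the monomial in $X_{(G-\epsilon,\alpha)}$ equals the corresponding monomial of $\kappa'$ in $X_{(G/\epsilon,\alpha/\epsilon)}$, where the weight of the merged vertex is $\alpha_i+\alpha_{i+1}$, exactly as prescribed by the near concatenation in the definition of $\alpha/\epsilon$. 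Summing over all proper colourings yields
\[
X_{(G-\epsilon,\alpha)} = X_{(G,\alpha)} + X_{(G/\epsilon,\alpha/\epsilon)},
\]
which rearranges to the claimed identity. The only subtle points are the bookkeeping for the loop case and ensuring that the weight of the merged vertex in $G/\epsilon$ agrees with the near concatenation operation on $\alpha$; there is no real obstacle, as the result is essentially a direct manipulation of the definition of $X_{(G,\alpha)}$.
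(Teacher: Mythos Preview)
Your proof is correct and is the standard argument for deletion--contraction of the extended chromatic symmetric function. Note that the paper does not actually prove this proposition: it is reproduced from \cite[Lemma 2]{CS} as a cited result, so there is no proof in the paper to compare against; your argument is essentially the one given by Crew and Spirkl.
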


\begin{example}\label{ex:dc}
Take $G$ to be the cycle on $3$ vertices $v_1,v_2,v_3$, and $\alpha=(3,2,1)$, which assigns the weights $w(v_1)=3$, $w(v_2)=2$, $w(v_3)=1$. Take $\epsilon$ to be the edge connecting $v_2$ and $v_3$. Removing $\epsilon$ from the edge set of $G$ gives us $G-\epsilon$. 

To obtain $G/\epsilon$, we take $G-\epsilon$ and identify the endpoints $v_2,v_3$ of $\epsilon$ as a single vertex $v^*$. The edge connecting $v_1$ and $v_2$ becomes an edge connecting $v_1$ and $v^*$. Similarly, the edge between $v_1$ and $v_3$ becomes another edge connecting $v_1$ and $v^*$. Since $v_2,v_3$ have consecutive labels, $G/\epsilon$ inherits a vertex ordering from $G$, with $v_1$ ordered before $v^*$. The weighting on $G/\epsilon$ is given by $\alpha/\epsilon=(3,2)\odot(1)=(3,3)$. 

In our pictorial shorthand, the deletion-contraction rule then gives us the following.
\begin{center}
\begin{tikzpicture}
\node[shape=circle,draw=black] (A) at (0,0) {3};
\node[shape=circle,draw=black] (B) at (1,0) {2};
\node[shape=circle,draw=black] (C) at (2,0) {1};
\path [-] (A) edge (B);
\path [-] (C) edge (B);
\draw (A)to [out=35, in=145] node[above] {} (C);
\node [] at (-.5,0) {\scalebox{2}{[}};
\node [] at (2.5,0) {\scalebox{2}{]}};
\node [] at (3,0) {\scalebox{1}{$=$}};
\node[shape=circle,draw=black] (A') at (4,0) {3};
\node[shape=circle,draw=black] (B') at (5,0) {2};
\node[shape=circle,draw=black] (C') at (6,0) {1};
\path [-] (A') edge (B');
\draw (A')to [out=35, in=145] node[above] {} (C');
\node [] at (3.5,0) {\scalebox{2}{[}};
\node [] at (6.5,0) {\scalebox{2}{]}};
\node[shape=circle,draw=black] ('A) at (8,0) {3};
\node[shape=circle,draw=black] ('B) at (9,0) {3};
\path [-] ('A) edge ('B);
\path [-] ('A) edge[bend left] ('B);
\node [] at (7.5,0) {\scalebox{2}{[}};
\node [] at (9.5,0) {\scalebox{2}{]}};
\node [] at (7,0) {\scalebox{1}{$-$}};
\end{tikzpicture}
\end{center}
\end{example}

We are now ready to discuss the results of the paper.

\section{An inclusion-exclusion relation}\label{sec:incexc}
Our first theorem is a useful expansion that relates the extended chromatic symmetric function of a weighted graph to those of certain other weighted graphs. Applications of this theorem will allow us to prove the results in later sections of our paper. Before stating the relation, we first give some necessary definitions.

\begin{definition}\label{def:aRb}
Given weighted graphs $(G,\alpha)$ and $(H,\beta)$ satisfying $\beta\preccurlyeq\alpha$, for a vertex $v$ of $G$, we write $R(v)$ to denote the set of consecutively labelled vertices in $H$ whose weights are summed to obtain the weight of $v$ when describing $\alpha$ as a coarsening of $\beta$. 

Note that this induces an equivalence relation on the vertices of $H$, whose equivalence classes correspond to vertices of $G$. We write this as $aRb$ for vertices $a,b$ of $H$ if and only if there exists vertex $v$ of $G$ such that $a,b$ are both in $R(v)$.
\end{definition}

\begin{example}\label{ex:aRb}
Let $G$ be a graph on 3 vertices labelled $v_1,v_2,v_3$ with weights given by $\alpha=(5,3,9)$ and $H$ be a graph on 5 vertices labelled $a_1,a_2,a_3,a_4,a_5$ with weights given by $\beta=(1,4,3,7,2)$. Then we can write $\alpha$ as a coarsening of $\beta$ via $\alpha=(\beta_1+\beta_2,\beta_3,\beta_4+\beta_5)$.

Hence, $R(v_1)=\{a_1,a_2\}$, $R(v_2)=\{a_3\}$, and $R(v_3)=\{a_4,a_5\}$. The equivalence relation $R$ on the vertices of $H$ is given by the reflexive symmetric transitive closure of the relations $a_1Ra_2$ and $a_4Ra_5$.
\end{example}

\begin{definition}\label{def:expansion}
Let $(G,\alpha)$ be a weighted graph. Then we say $(H,\beta)$ is an \textit{expansion} of $(G,\alpha)$ if
\begin{enumerate}[1.]
    \item the composition $\beta$ is a refinement of $\alpha$, and
    \item for all pairs of vertices $u,v$ (not necessarily distinct) of $G$, there is an edge $uv$ in $E(G)$ if and only if there exists an edge $ab$ in $E(H)$ with $a\in R(u)$ and $b\in R(v)$.
\end{enumerate}
\end{definition}

\begin{example}\label{ex:exp}
Drawn below, the weighted graph $(H,(3,2,3))$ on vertices $a_1,a_2,a_3$, labelled from left to right, is an expansion of the weighted graph $(G,(3,5))$ on vertices $v_1,v_2$, labelled from left to right.

\begin{center}
\begin{tikzpicture}
\node[shape=circle,draw=black] (A) at (0,0) {3};
\node[shape=circle,draw=black] (B) at (1,0) {2};
\node[shape=circle,draw=black] (C) at (2,0) {3};
\path [-] (A) edge (B);
\draw (A)to [out=35, in=145] node[above] {} (C);
\end{tikzpicture}
\quad \quad \quad \quad \quad \quad \quad \quad
\begin{tikzpicture}
\node[shape=circle,draw=black] (A) at (0,0) {3};
\node[shape=circle,draw=black] (B) at (1,0) {5};
\path [-] (A) edge (B);
\end{tikzpicture}
\end{center}

First, we see that $(3,2,3)$ is a refinement of $(3,5)=(3,2+3)$, so $R(v_1)=\{a_1\}$ and $R(v_2)=\{a_2,a_3\}$. Since $G$ has no loop on $v_1$, there cannot be a loop on $a_1$. Because $G$ has an edge connecting $v_1$ and $v_2$, we must have at least one edge of $H$ between $a_1$ (the only element of $R(v_1)$) and one of $a_2,a_3$ (the elements of $R(v_2)$); this condition is satisfied by both edges of $H$. Finally, $G$ has no loop on $v_2$, so $H$ cannot have a loop on either of $a_2,a_3$, nor an edge connecting $a_2$ and $a_3$.

Note we could have omitted either of the two edges of $(H,(3,2,3))$, and still obtained an expansion of $(G,(3,5))$, but not both.
\end{example}

With these definitions in mind, we state our first theorem.

\begin{theorem}\label{the:incexc}
Let $(G,\alpha)$ be a weighted graph with expansion $(H,\beta)$. Let $E'$ be a multiset of edges on the vertices of $H$ such that for each pair of vertices $a,b$ of $H$, we have $a$ and $b$ in the same connected component of $(V(H),E')$ if and only if $aRb$. Then 
\begin{equation*}
    X_{(G,\alpha)}=\sum_{S\subseteq E'}(-1)^{|S|} X_{(H+S,\beta)}.
\end{equation*}
\end{theorem}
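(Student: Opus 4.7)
The plan is to expand both sides as weighted sums over proper colourings and match them via a weight-preserving bijection combined with a standard inclusion-exclusion on sub-multisets of $E'$.

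First, I would establish a bijection between proper colourings $\kappa$ of $G$ and proper colourings $\kappa'$ of $H$ that are constant on each equivalence class $R(v)$: given $\kappa$, set $\kappa'(a) = \kappa(v)$ for $a \in R(v)$; conversely, a class-constant $\kappa'$ determines $\kappa$ by reading off the common colour on each class. The two axioms in Definition~\ref{def:expansion} make this a bijection of proper colourings, since every edge of $E(H)$ lies between two equivalence classes joined in $G$, and every edge of $E(G)$ is witnessed by at least one edge of $E(H)$ between the corresponding classes. (Loops in $G$ or within-class edges in $H$ each force both sides of the desired identity to vanish, and can be dispatched first.) Because $\beta$ refines $\alpha$, we have $\alpha_v = \sum_{a \in R(v)} \beta_a$, so $x_{\kappa(v)}^{\alpha_v} = \prod_{a \in R(v)} x_{\kappa'(a)}^{\beta_a}$, and multiplying over $v$ gives
\begin{equation*}
X_{(G,\alpha)} = \sum_{\kappa'} x^{\beta}_{\kappa'},
\end{equation*}
where $\kappa'$ ranges over proper colourings of $H$ that are constant on each $R(v)$.

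Next, I would expand the right-hand side and interchange the order of summation. A colouring is proper on $H+S$ precisely when it is proper on $H$ and no edge of $S$ has both endpoints the same colour, so
\begin{equation*}
\sum_{S \subseteq E'} (-1)^{|S|} X_{(H+S,\beta)} = \sum_{\kappa'} x^{\beta}_{\kappa'} \sum_{S \subseteq E' \setminus M(\kappa')} (-1)^{|S|},
\end{equation*}
where the outer sum is over proper colourings $\kappa'$ of $H$ and $M(\kappa') \subseteq E'$ is the sub-multiset of edges of $E'$ that are monochromatic under $\kappa'$. The inner sum equals $(1-1)^{|E'|-|M(\kappa')|}$, which vanishes unless $M(\kappa') = E'$.

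Finally, $M(\kappa') = E'$ means every edge of $E'$ is monochromatic under $\kappa'$, which is equivalent to $\kappa'$ being constant on each connected component of $(V(H), E')$; by the hypothesis on $E'$, this is the same as being constant on each equivalence class $R(v)$. Combining with the first paragraph, both sides collapse to the same weighted sum over proper colourings of $H$ that are constant on equivalence classes, completing the proof. The main obstacle is purely bookkeeping: multiplicities in $E'$ must be tracked as an indexed family so that $\sum_{S \subseteq E'} (-1)^{|S|}$ genuinely telescopes as $(1-1)^{|E'|}$, and the loop and within-class multi-edge cases must be checked (or simply observed to zero both sides), but no new idea is required beyond the bijection and the sign cancellation.
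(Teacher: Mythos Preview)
Your proof is correct and follows essentially the same strategy as the paper: both establish the bijection between proper colourings of $G$ and class-constant proper colourings of $H$, then use inclusion-exclusion over $E'$ to isolate the class-constant colourings. The only cosmetic difference is that the paper phrases the second step as ``proper colourings of $H$ that are not proper colourings of $H+\epsilon$ for any $\epsilon\in E'$'' and invokes the inclusion-exclusion principle abstractly, whereas you interchange the order of summation and compute $\sum_{S\subseteq E'\setminus M(\kappa')}(-1)^{|S|}$ explicitly; these are two presentations of the same cancellation. Your remark about tracking $E'$ as an indexed family is apt, and your aside about loops and within-class edges is unnecessary (your argument already handles them, since in those cases there are simply no class-constant proper colourings of $H$), but harmless.
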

\begin{proof}
Because ($H,\beta$) is an expansion of $(G,\alpha)$, there is a natural bijection between the proper colourings of $(G,\alpha)$ and the proper colourings of $(H,\beta)$ assigning the same colour to vertices $a,b$ of $H$ whenever $aRb$: given a proper colouring $\kappa$ on $(G,\alpha)$, simply assign for each vertex $v$ of $G$ the colour $\kappa(v)$ to all vertices in $R(v)$. In particular, a proper colouring of $(G,\alpha)$ contributes the same monomial to $X_{(G,\alpha)}$ as its image under the bijection does to $X_{(H,\beta)}$.

Because the connected components of $(V(H),E')$ correspond to the equivalence classes induced by $R$, the proper colourings of $H$ that assign a single colour to each equivalence class of $R$ are exactly the proper colourings of $H$ that give the endpoints of $\epsilon$ the same colour for each edge $\epsilon$ in $E'$. These colourings can be thought of as the proper colourings of $H$ excluding those that assign different colours to the endpoints of $\epsilon$ for any edge $\epsilon$ in $E'$, or equivalently, the proper colourings of $H$ that are not proper colourings of $H+\epsilon$ for any $\epsilon$ in $E'$.

Given a nonempty collection of edges $\emptyset\subsetneq S \subseteq E'$, the intersection of the proper colourings of $H+\epsilon$ over all edges $\epsilon$ in $S$ gives exactly all the proper colourings of $H$ that also satisfy that no edge of $S$ has endpoints assigned the same colour\,---\,namely, all the proper colourings of $H+S$.

Applying the principle of inclusion-exclusion, we obtain
\begin{equation*}
    X_{(G,\alpha)}=X_{(H,\beta)}-\sum_{\emptyset\subsetneq S\subseteq E'}(-1)^{|S|-1} X_{(H+S,\beta)}=\sum_{S\subseteq E'}(-1)^{|S|} X_{(H+S,\beta)}.
\end{equation*}
\end{proof}

\begin{example}\label{ex:exp2}
We saw in Example~\ref{ex:exp} a weighted graph $(G,(3,5))$ on vertices $v_1,v_2$ with expansion $(H,(3,2,3))$ on vertices $a_1,a_2,a_3$. Let us take $E'$ to be a pair of edges both connecting vertices $a_2$ and $a_3$ of $H$.

The conditions of Theorem~\ref{the:incexc} are satisfied, because the connected components of $(V(H),E')$ partition $V(H)$ into the sets $\{a_1\}$ and $\{a_2,a_3\}$, which are $R(v_1)$ and $R(v_2)$, respectively.

Pictorially, we can write the result of applying Theorem~\ref{the:incexc} as follows.
\begin{center}
\begin{tikzpicture}
\node[shape=circle,draw=black] (B) at (.5,0) {3};
\node[shape=circle,draw=black] (C) at (1.5,0) {5};
\path [-] (C) edge (B);
\node [] at (0,0) {\scalebox{2}{[}};
\node [] at (2,0) {\scalebox{2}{]}};
\node [] at (2.5,0) {\scalebox{1}{$=$}};
\node[shape=circle,draw=black] (A') at (4,.5) {3};
\node[shape=circle,draw=black] (B') at (5,.5) {2};
\node[shape=circle,draw=black] (C') at (6,.5) {3};
\path [-] (A') edge (B');
\draw (A')to [out=35, in=145] node[above] {} (C');
\node [] at (3.5,.5) {\scalebox{2}{[}};
\node [] at (6.5,.5) {\scalebox{2}{]}};
\node[shape=circle,draw=black] ('A) at (8,.5) {3};
\node[shape=circle,draw=black] ('B) at (9,.5) {2};
\node[shape=circle,draw=black] ('C) at (10,.5) {3};
\path [-] ('A) edge ('B);
\path [-] ('B) edge ('C);
\draw ('A)to [out=35, in=145] node[above] {} ('C);
\node [] at (7.5,.5) {\scalebox{2}{[}};
\node [] at (10.5,.5) {\scalebox{2}{]}};
\node [] at (7,.5) {\scalebox{1}{$-$}};
\node [] at (3,-.5) {\scalebox{1}{$-$}};
\node[shape=circle,draw=black] (A') at (4,-.5) {3};
\node[shape=circle,draw=black] (B') at (5,-.5) {2};
\node[shape=circle,draw=black] (C') at (6,-.5) {3};
\path [-] (A') edge (B');
\draw (A')to [out=35, in=145] node[above] {} (C');
\path [-] (B') edge[bend right] (C');
\node [] at (3.5,-.5) {\scalebox{2}{[}};
\node [] at (6.5,-.5) {\scalebox{2}{]}};
\node[shape=circle,draw=black] ('A) at (8,-.5) {3};
\node[shape=circle,draw=black] ('B) at (9,-.5) {2};
\node[shape=circle,draw=black] ('C) at (10,-.5) {3};
\path [-] ('A) edge ('B);
\path [-] ('B) edge ('C);
\draw ('A)to [out=35, in=145] node[above] {} ('C);
\path [-] ('B) edge[bend right] ('C);
\node [] at (7.5,-.5) {\scalebox{2}{[}};
\node [] at (10.5,-.5) {\scalebox{2}{]}};
\node [] at (7,-.5) {\scalebox{1}{$+$}};
\end{tikzpicture}
\end{center}
\end{example}

\begin{corollary}\label{cor:aRb}
Let $(G,\alpha)$ be a weighted graph with expansion $(H,\beta)$. Let $E'$ be a multiset of edges on the vertices of $H$ such that for each pair of vertices $a,b$ of $H$, we have $a$ and $b$ in the same connected component of $(V(H),E')$ if and only if $aRb$. Then 
\begin{equation*}
    \chi_{G}=\sum_{S\subseteq E'}(-1)^{|S|} \chi_{H+S}.
\end{equation*}
\end{corollary}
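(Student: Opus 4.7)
The plan is to deduce this corollary directly from Theorem~\ref{the:incexc} by applying the standard specialization that recovers the chromatic polynomial from (extended) chromatic symmetric functions. Specifically, for any positive integer $k$, substitute $x_i = 1$ for $i \le k$ and $x_i = 0$ for $i > k$ into the identity
\begin{equation*}
    X_{(G,\alpha)}=\sum_{S\subseteq E'}(-1)^{|S|} X_{(H+S,\beta)}
\end{equation*}
of Theorem~\ref{the:incexc}, and then argue that the resulting polynomial identity in $k$ is exactly the claimed identity of chromatic polynomials.

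First I would verify that this specialization sends $X_{(G,w)}$ to $\chi_G(k)$ regardless of the weight function $w$. Indeed, in the monomial $x^{w(v_1)}_{\kappa(v_1)}\cdots x_{\kappa(v_N)}^{w(v_N)}$ appearing in Definition~\ref{def:EXG}, every exponent is a positive integer, so under the substitution the monomial evaluates to $1$ if $\kappa(v_j)\le k$ for all $j$ and to $0$ otherwise. Summing over proper colourings $\kappa$ therefore yields exactly the number of proper colourings of $G$ using colours from $[k]$, which is $\chi_G(k)$. In particular, the weights $\beta$ on $H+S$ play no role after the substitution, so the right-hand side becomes $\sum_{S\subseteq E'}(-1)^{|S|}\chi_{H+S}(k)$.

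Putting these together, the specialized identity reads $\chi_G(k) = \sum_{S\subseteq E'}(-1)^{|S|}\chi_{H+S}(k)$ for every positive integer $k$. Since both sides are polynomials in $k$ (each $\chi_{H+S}$ is a chromatic polynomial, well-defined even when $H+S$ contains loops, in which case it simply vanishes), equality at all positive integers forces equality as polynomials, giving $\chi_G = \sum_{S\subseteq E'}(-1)^{|S|}\chi_{H+S}$ as desired.

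There is essentially no obstacle here: the entire combinatorial content was already absorbed into the proof of Theorem~\ref{the:incexc}, and the corollary is a routine specialization. The only point worth being explicit about is that the substitution does not depend on the weight function, so the refinement $\beta$ of $\alpha$ and the weights on the expansions $H+S$ become irrelevant on both sides.
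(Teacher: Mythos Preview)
Your proposal is correct and follows essentially the same approach as the paper: specialize the identity of Theorem~\ref{the:incexc} at $x_i=1$ for $i\le k$ and $x_i=0$ otherwise to obtain $\chi_G(k)=\sum_{S\subseteq E'}(-1)^{|S|}\chi_{H+S}(k)$ for all positive integers $k$, and then conclude equality of polynomials. Your write-up is somewhat more explicit than the paper's about why the weight function disappears under the specialization, but the argument is the same.
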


\begin{proof}
\rev{First note that $X_{(G,w)}$ specializes to $\chi_{G}(k)$, when evaluated at $x_i=1$ for $i\leq k$ and $x_i=0$ otherwise. Hence, our equality} holds on every positive integer $k$ by evaluating the formula in Theorem~\ref{the:incexc} at $x_i=1$ for $i\le k$ and $x_i=0$ otherwise. Because the real polynomials on both sides of the equation agree on infinitely many values, they must be equal.
\end{proof}

Theorem~\ref{the:incexc} also generalizes a related result known as \textit{$k$-deletion}, which we state next, and give a new short and simple proof.

\begin{corollary}[$k$-deletion]\label{cor:kdel} 
\cite[Theorem 6]{CS}
Let $(G,\alpha)$ be a weighted graph containing a cycle $C$ on $k$ vertices, and let $\epsilon$ be a fixed edge of this cycle. Then 
\begin{equation*}
    \sum_{S\subseteq E(C)- \epsilon}(-1)^{|S|}X_{(G- S,\alpha)} = 0.
\end{equation*}
\end{corollary}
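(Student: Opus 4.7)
The plan is to deduce the identity from Theorem~\ref{the:incexc} by exhibiting a weighted graph whose extended chromatic symmetric function vanishes (because it contains a loop) and whose inclusion-exclusion expansion is precisely the $k$-deletion sum. Specifically, set $G_0 = G - (E(C)\setminus\{\epsilon\})$, and let $\tilde G$ be the graph obtained from $G_0$ by identifying all $k$ vertices of $C$ into a single vertex $v^*$. Let $\tilde\alpha$ be the composition obtained from $\alpha$ by replacing the weights of the cycle vertices with their sum (assigned to $v^*$), relabelling if necessary so those vertices are consecutive---a harmless operation, since the extended chromatic symmetric function is invariant under reordering.

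The crucial observation is that both endpoints of $\epsilon$ lie in $C$, so after identification $\epsilon$ becomes a loop on $v^*$ in $\tilde G$. Since a graph with a loop admits no proper colouring, $X_{(\tilde G,\tilde\alpha)} = 0$. I would then verify that $(G_0,\alpha)$ is an expansion of $(\tilde G,\tilde\alpha)$ in the sense of Definition~\ref{def:expansion}: $\alpha$ refines $\tilde\alpha$ with $R(v^*)$ equal to the set of cycle vertices and $R(u)=\{u\}$ for every non-cycle vertex $u$, and every edge of $\tilde G$ has at least one corresponding edge of $G_0$ between the appropriate $R$-classes (and vice versa), as required by Definition~\ref{def:expansion}(2).

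Next, take $E' = E(C)\setminus\{\epsilon\}$, the length-$(k-1)$ path on the cycle vertices obtained by deleting $\epsilon$ from $C$. The connected components of $(V(G_0),E')$ are precisely $\{c_1,\ldots,c_k\}$ together with a singleton for each non-cycle vertex, matching the $R$-equivalence classes. Applying Theorem~\ref{the:incexc} then gives
$$0 \;=\; X_{(\tilde G,\tilde\alpha)} \;=\; \sum_{S \subseteq E(C)\setminus\{\epsilon\}} (-1)^{|S|}\, X_{(G_0+S,\,\alpha)}.$$
The substitution $T = (E(C)\setminus\{\epsilon\}) \setminus S$, for which $G_0+S = G-T$ and $|S| = (k-1)-|T|$, rewrites this identity as $(-1)^{k-1}\sum_{T \subseteq E(C)\setminus\{\epsilon\}} (-1)^{|T|} X_{(G-T,\alpha)} = 0$, and renaming $T$ back to $S$ yields the claim.

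The main step requiring care is verifying the expansion condition when $G$ already contains edges among the cycle vertices beyond the $k$ edges of $C$, since these persist in $G_0$ and yield extra loops on $v^*$ after contraction. The expansion definition only demands the existence---not the multiplicity---of a matching edge, so this is accommodated without modification, and the remaining subtleties about vertex labelling are purely notational.
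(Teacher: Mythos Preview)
Your proof is correct and follows essentially the same approach as the paper: both define the contracted graph (your $\tilde G$, the paper's $H$) obtained by identifying the $k$ cycle vertices, observe that $\epsilon$ becomes a loop so its extended chromatic symmetric function vanishes, verify that $G-(E(C)\setminus\{\epsilon\})$ is an expansion of this contracted graph with $E'=E(C)\setminus\{\epsilon\}$, and apply Theorem~\ref{the:incexc}. The paper compresses your substitution $T=(E(C)\setminus\{\epsilon\})\setminus S$ into a single displayed line, and your careful discussion of extra edges among cycle vertices and of the relabelling is left implicit there, but the argument is the same.
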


\begin{proof}
We can assume without loss of generality (by relabelling the vertices as needed), that $C$ is a cycle on the first $k$ vertices $v_1,\dots,v_k$ of the labelled graph $G$. Let $(H,\beta)$ be the weighted graph obtained by contracting (in any order) the edges $E(C)- \epsilon$ of $G$. Note that the image of $\epsilon$ in the new graph is a loop on the resulting vertex, and so $X_{(H,\beta)}=0$.

Taking $E'=E(C)- \epsilon$, we see that $(G- E',\alpha)$ is an expansion of $(H,\beta)$ satisfying the conditions of Theorem~\ref{the:incexc} with the set $E'$. Applying the theorem, we obtain
\begin{equation*}
    \sum_{S\subseteq E(C)- \epsilon}(-1)^{|S|}X_{(G- S,\alpha)} = (-1)^{|E'|} X_{(H,\beta)}= 0.
\end{equation*}
\end{proof}

\begin{remark}\label{rem:kdel}
In the absence of a deletion-contraction rule for the unweighted chromatic symmetric function, the technique of $k$-deletion was developed and generalized from its original form across several different papers as \rev{a} way to write the chromatic symmetric function of a graph as a combination of the chromatic symmetric functions of other graphs.

In 2014, Orellana and Scott discovered and proved the triple-deletion rule \cite[Theorem 3.1]{OS}, which is the case of $k$-deletion on unweighted graphs for $k=3$, by directly expanding the $2^{3-1}=4$ terms of the summation into the power sum symmetric functions and computing. In a 2018 paper, Dahlberg and van Willigenburg generalized the result of Orellana and Scott on unweighted graphs to arbitrary $k$ \cite[Proposition 5]{DvW} by applying a sign-reversing involution to the terms of the expansion.

When Crew and Spirkl introduced the extended chromatic symmetric function, they were able to prove Corollary~\ref{cor:kdel} via induction \cite[Theorem 6]{CS}, employing repeated applications of the deletion-contraction rule.

Our proof of weighted $k$-deletion is novel in that it is not only simple, but also provides a  combinatorial interpretation as to why the result should hold at all: the summation in question evaluates to $0$ because it describes (up to a sign) all the proper colourings of a certain weighted graph with a loop\,---\,of which there are exactly none.
\end{remark}

\section{Composition of compositions and equality of weighted paths}\label{sec:compcomp}
It is an open problem whether if $G$ and $H$ are two trees with $X_G=X_H$ then $G$ and $H$ are necessarily isomorphic as graphs. In their paper introducing the extended chromatic symmetric function \cite{CS}, Crew and Spirkl \rev{gave an example of} two weighted trees with the same extended chromatic symmetric function that were nonisomorphic as weighted graphs, which was originally noted by Loebl and Sereni in \cite{LS}. The example they \rev{gave} in \cite[Figure 1]{CS} compared two $5$-vertex paths: one with weights $1,2,1,3,2$ in given order, and another with weights $1,3,2,1,2$ in given order.

In our composition notation for weighted graphs, they found that
\begin{equation*}
    X_{(P_5,(1,2,1,3,2))}=X_{(P_5,(1,3,2,1,2))}.
\end{equation*}

It is a curious coincidence, then, that \rev{we have the  equality of ribbon Schur functions}
\begin{equation*}
    r_{(1,2,1,3,2)}=r_{(1,3,2,1,2)}.
\end{equation*}

Another property of the ribbon Schur functions is that they have a simple multiplication rule. For any two nonempty compositions $\alpha,\beta$,
\begin{equation}\label{eq:ribMult}
    r_\alpha r_\beta = r_{\alpha\cdot\beta} + r_{\alpha\odot\beta}.
\end{equation}

The extended chromatic symmetric functions of weighted paths follow the same multiplication rule:
\begin{equation}\label{eq:pathMult}
    X_{(P_{\ell(\alpha)},\alpha)}X_{(P_{\ell(\beta)},\beta)}=X_{(P_{\ell(\alpha)}\mid P_{\ell(\beta)}, \alpha\cdot\beta)}=X_{(P_{\ell(\alpha\cdot\beta)},\alpha\cdot\beta)}+X_{(P_{\ell(\alpha\odot\beta)},\alpha\odot\beta)}.
\end{equation}

The above equality is verified by applying Proposition~\ref{prop:dc}, the deletion-contraction rule: the weighted graph $(P_{\ell(\alpha)}\mid P_{\ell(\beta)},\alpha\cdot\beta)$ can be interpreted as the result of deleting a certain edge of $(P_{\ell(\alpha\cdot\beta)},\alpha\cdot\beta)$, while $(P_{\ell(\alpha\odot\beta)},\alpha\odot\beta)$ would result from contracting that edge.

As we shall soon see, the similarities between the ribbon Schur functions and the extended chromatic symmetric functions of weighted paths are not   superficial. Understanding the connection between them will allow us to, among other things, completely classify when the extended chromatic symmetric functions of two weighted paths are equal.

\begin{definition}\label{def:nifty}
A family $\{G_n\}_{n\ge1}$ of simple connected graphs is \textit{nifty} if each $G_n$ has exactly $n$ vertices. Given a nifty family, we write $G_\lambda$ for a partition $\lambda=(\lambda_1, \dots, \lambda_{\ell(\lambda)})$ to mean the disjoint union of graphs $G_{\lambda_1}\cup\cdots\cup G_{\lambda_{\ell(\lambda)}}$. If we interpret the $G_n$ as labelled graphs, we can also write $G_\alpha$ for a composition $\alpha=(\alpha_1,\dots,\alpha_{\ell(\alpha)})$ to mean the labelled graph $G_{\alpha_1}\mid\dots\mid G_{\alpha_{\ell(\alpha)}}$. As unlabelled graphs, we then always have $G_\alpha = G_{\widetilde\alpha}$.
\end{definition}

\begin{example}\label{ex:nifty}
Nifty families of graphs include the paths $\{P_n\}_{n\ge1}$ and the stars $\{S_n\}_{n\ge1}$.
\end{example}

The following result of Cho and van Willigenburg will be useful to us.

\begin{theorem}\label{the:chr}
\cite[Lemma 3 \& Theorem 5]{ChovW}
Let $\{G_n\}_{n\ge1}$ be a nifty family of graphs. Then $\{X_{G_\lambda}\}_{\lambda\vdash n\ge0}$ is a multiplicative basis for $\Sym$. Moreover, the chromatic symmetric functions $\{X_{G_n}\}_{n\ge1}$ are algebraically independent and freely generate $\Sym$.
\end{theorem}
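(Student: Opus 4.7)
The plan is to establish all three claims (multiplicativity, basis, free generation) simultaneously, by showing $\{X_{G_\lambda}\}_{\lambda\vdash n}$ is a basis of $\Sym^n$ for every $n\ge 0$ via a triangular change-of-basis with the power sum basis. First, observe that $X_{G\cup H}=X_G X_H$ directly from Definition~\ref{def:XG} (a proper colouring of $G\cup H$ is just an independent pair of proper colourings on $G$ and $H$), so $X_{G_\lambda}=\prod_i X_{G_{\lambda_i}}$. Consequently, once the basis property is proved, multiplicativity is automatic and $\{X_{G_n}\}_{n\ge 1}$ must be algebraically independent generators of the polynomial algebra $\Sym$.

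The main engine is Stanley's power sum expansion \cite[Theorem 2.6]{Stan95},
\begin{equation*}
    X_G=\sum_{\pi\in L_G}\mu_{L_G}(\hat{0},\pi)\,p_{\type(\pi)},
\end{equation*}
where $L_G$ is the bond lattice of $G$ (set partitions of $V(G)$ whose blocks induce connected subgraphs), and $\type(\pi)$ is the integer partition of block sizes. For $G_n$ simple and connected, the partition $\hat{1}=\{V(G_n)\}$ lies in $L_{G_n}$ and is the only element with $\type(\pi)=(n)$, whereas every other $\pi\in L_{G_n}$ has $\type(\pi)$ a strict refinement of $(n)$. Setting $c_n:=\mu_{L_{G_n}}(\hat{0},\hat{1})$, this gives $X_{G_n}=c_n p_n+(\text{terms }p_\mu\text{ with }\mu\text{ a strict refinement of }(n))$. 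Multiplying over $\lambda=(\lambda_1,\dots,\lambda_k)$ yields
\begin{equation*}
    X_{G_\lambda}=\Bigl(\prod_i c_{\lambda_i}\Bigr)p_\lambda+\sum_{\mu\prec\lambda}d_{\lambda\mu}\,p_\mu,
\end{equation*}
where $\mu\prec\lambda$ denotes strict refinement. Since refinement is a subrelation of dominance on partitions, any linear extension of dominance order is also a linear extension of refinement, so the expansion is triangular with diagonal entries $\prod_i c_{\lambda_i}$. Provided each $c_n\ne 0$, the transition matrix from $\{p_\lambda\}$ to $\{X_{G_\lambda}\}$ is invertible, making $\{X_{G_\lambda}\}_{\lambda\vdash n}$ a basis of $\Sym^n$; the theorem then follows.

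The only nontrivial ingredient is therefore $c_n\ne 0$, which will be the main obstacle. For this I would invoke Rota's theorem: since $G_n$ is simple, $L_{G_n}$ is the lattice of flats of the cycle matroid of $G_n$ and hence a geometric lattice, and for any geometric lattice the Möbius values $\mu_L(\hat{0},x)$ are nonzero and alternate in sign with rank. Equivalently, $c_n$ equals, up to a sign, the coefficient of $k$ in $\chi_{G_n}(k)$, which is nonzero since $k$ is a simple factor of the chromatic polynomial of any connected graph. This is the one place the argument relies on a nontrivial external fact; everything else is formal bookkeeping with Stanley's power sum expansion and the refinement order.
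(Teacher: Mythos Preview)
Your proposal is correct and follows essentially the same approach as the paper. Theorem~\ref{the:chr} itself is cited without proof, but the paper proves its weighted generalization, Theorem~\ref{the:wbase}, by the same triangularity argument: expand $X_{G_\lambda}$ in power sums via Proposition~\ref{prop:mup}, observe that only $p_\lambda$ and terms $p_\mu$ with $\ell(\mu)>\ell(\lambda)$ appear, and note the leading coefficient is nonzero because $(-1)^{|V(G)|-\ell(\pi)}\mu(\hat 0,\pi)>0$ (stated just before Proposition~\ref{prop:mup}, and equivalent to your invocation of Rota's theorem on geometric lattices).
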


The above theorem gives us a mechanism to better understand weighted paths:    expand the extended chromatic symmetric function of a weighted path in terms of a basis generated by a nifty family.

\begin{example}\label{ex:chr}
Consider the weighted path on three vertices with weights $2,1,2$ given in order from left to right. Let us rearrange the deletion-contraction rule of Proposition~\ref{prop:dc} to
\begin{equation*}
    X_{(G/\epsilon,\alpha/\epsilon)}=X_{(G-\epsilon,\alpha)}-X_{(G,\alpha)}.
\end{equation*}

We can apply this form of the deletion-contraction rule \SvW{thrice} to obtain the following expansion.
\begin{align*}
    X_{(P_3,(2,1,2))} &= 
    X_{(P_{(3,1)},(2,1,1,1))}-X_{(P_4,(2,1,1,1))}\\
    &=X_{P_{(1,3,1)}}- X_{P_{(4,1)}} -X_{P_{(1,4)}}+X_{P_5}
\end{align*}

We illustrate this expansion below.
\begin{center}
\begin{tikzpicture}
\node[shape=circle,draw=black] (A) at (-.5,0) {2};
\node[shape=circle,draw=black] (B) at (.5,0) {1};
\node[shape=circle,draw=black] (C) at (1.5,0) {2};
\path [-] (C) edge (B);
\path [-] (A) edge (B);
\node [] at (-1,0) {\scalebox{2}{[}};
\node [] at (2,0) {\scalebox{2}{]}};
\node [] at (2.5,0) {\scalebox{1}{$=$}};
\node[shape=circle,draw=black] (A') at (4,.5) {1};
\node[shape=circle,draw=black] (B') at (5,.5) {1};
\node[shape=circle,draw=black] (C') at (6,.5) {1};
\node[shape=circle,draw=black] (D') at (7,.5) {1};
\node[shape=circle,draw=black] (E') at (8,.5) {1};
\path [-] (B') edge (C');
\path [-] (D') edge (C');
\node [] at (3.5,.5) {\scalebox{2}{[}};
\node [] at (8.5,.5) {\scalebox{2}{]}};

\node[shape=circle,draw=black] (A') at (10,.5) {1};
\node[shape=circle,draw=black] (B') at (11,.5) {1};
\node[shape=circle,draw=black] (C') at (12,.5) {1};
\node[shape=circle,draw=black] (D') at (13,.5) {1};
\node[shape=circle,draw=black] (E') at (14,.5) {1};
\path [-] (A') edge (B');
\path [-] (B') edge (C');
\path [-] (D') edge (C');
\node [] at (9.5,.5) {\scalebox{2}{[}};
\node [] at (14.5,.5) {\scalebox{2}{]}};
\node [] at (9,.5) {\scalebox{1}{$-$}};

\node[shape=circle,draw=black] (A') at (4,-.5) {1};
\node[shape=circle,draw=black] (B') at (5,-.5) {1};
\node[shape=circle,draw=black] (C') at (6,-.5) {1};
\node[shape=circle,draw=black] (D') at (7,-.5) {1};
\node[shape=circle,draw=black] (E') at (8,-.5) {1};
\path [-] (B') edge (C');
\path [-] (D') edge (C');
\path [-] (D') edge (E');
\node [] at (3.5,-.5) {\scalebox{2}{[}};
\node [] at (8.5,-.5) {\scalebox{2}{]}};
\node [] at (3,-.5) {\scalebox{1}{$-$}};

\node[shape=circle,draw=black] (A') at (10,-.5) {1};
\node[shape=circle,draw=black] (B') at (11,-.5) {1};
\node[shape=circle,draw=black] (C') at (12,-.5) {1};
\node[shape=circle,draw=black] (D') at (13,-.5) {1};
\node[shape=circle,draw=black] (E') at (14,-.5) {1};
\path [-] (A') edge (B');
\path [-] (B') edge (C');
\path [-] (D') edge (C');
\path [-] (D') edge (E');
\node [] at (9.5,-.5) {\scalebox{2}{[}};
\node [] at (14.5,-.5) {\scalebox{2}{]}};
\node [] at (9,-.5) {\scalebox{1}{$+$}};
\end{tikzpicture}
\end{center}
\end{example}

In the above example, note that the compositions that appear are exactly the coarsenings of $(2,1,2)^c=(1,3,1)$, with the terms alternating in sign depending on the number of parts in the composition. We will prove that this will always be the case for any weighted path, after proving a lemma.

\begin{lemma}\label{lem:comp}
Consider the null graph ${N_n}$ on $n$ vertices $v_1,\dots,v_n$. Let $\alpha$ be a composition of $n$. Then the graph  ${N_n}+\{v_iv_{i+1}\mid i\in \textup{set}(\alpha^c)\}$ is the labelled graph $P_{\alpha}$.
\end{lemma}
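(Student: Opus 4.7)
The plan is simply to unwind both sides of the claimed equality and check that the edge multisets agree. Both graphs sit on the labelled vertex set $\{v_1,\dots,v_n\}$, so only the edge sets need to be compared.

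First, I would unpack $P_\alpha$ using Definition~\ref{def:nifty}: writing $s_0=0$ and $s_j=\alpha_1+\cdots+\alpha_j$ for $1\le j\le \ell(\alpha)$, the labelled graph $P_\alpha = P_{\alpha_1}\mid\cdots\mid P_{\alpha_{\ell(\alpha)}}$ has its $j$th path component on the consecutive vertices $v_{s_{j-1}+1},\dots,v_{s_j}$, and the edges within this block are exactly $v_iv_{i+1}$ for $s_{j-1}+1\le i\le s_j-1$. Taking the union over $j=1,\dots,\ell(\alpha)$, the edge set of $P_\alpha$ is
\[
E(P_\alpha)=\{v_iv_{i+1}: i\in[n-1],\ i\neq s_j \text{ for any } 1\le j\le\ell(\alpha)-1\}.
\]
In other words, $E(P_\alpha)$ consists of all consecutive edges on $v_1,\dots,v_n$ \emph{except} those indexed by $\{s_1,\dots,s_{\ell(\alpha)-1}\}=\mathrm{set}(\alpha)$.

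Second, I would appeal to the definition of the complement: $\mathrm{set}(\alpha^c)=[n-1]\setminus\mathrm{set}(\alpha)$. Therefore
\[
\{v_iv_{i+1}: i\in\mathrm{set}(\alpha^c)\}=\{v_iv_{i+1}: i\in[n-1]\setminus\mathrm{set}(\alpha)\}=E(P_\alpha).
\]
Since $N_n$ has vertex set $\{v_1,\dots,v_n\}$ and no edges, adding exactly this edge set to $N_n$ yields the labelled graph $P_\alpha$, as claimed.

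There is no genuine obstacle here; the lemma is a bookkeeping statement matching the index set of the "break points" $\mathrm{set}(\alpha)$ with the missing consecutive edges of $P_\alpha$. The only thing to be slightly careful about is the off-by-one between "vertices in block $j$" ($s_{j-1}+1,\dots,s_j$) and "edges within block $j$" ($i$ from $s_{j-1}+1$ to $s_j-1$), so that the indices $s_1,\dots,s_{\ell(\alpha)-1}$ are precisely the edge indices absent from $E(P_\alpha)$.
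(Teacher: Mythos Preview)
Your proof is correct and follows essentially the same approach as the paper's: both compute the edge set of $P_\alpha$ by identifying the consecutive-vertex blocks and observing that the only missing edges $v_iv_{i+1}$ are those with $i\in\textup{set}(\alpha)$, whence the present edges are indexed by $\textup{set}(\alpha^c)=[n-1]\setminus\textup{set}(\alpha)$.
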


\begin{proof}
It suffices to show that the edge set of $P_\alpha$ is $\{v_iv_{i+1}\mid i\in\textup{set}(\alpha^c)\}$. Write $\alpha=(\alpha_1,\dots,\alpha_{\ell(\alpha)})$.

Then the connected components of $P_\alpha$ partition its vertices into the sets $\{v_1,\dots,v_{\alpha_1}\}$, $\{v_{\alpha_1+1},\dots,v_{\alpha_1+\alpha_2}\}$, \dots, $\{v_{\alpha_1+\dots+\alpha_{\ell(\alpha)-1}+1},\dots,v_n\}$. The $i$th connected component of $P_\alpha$ is a copy of the labelled graph $P_{\alpha_i}$ with the same relative ordering of vertex labels. Hence, the edge set of $P_\alpha$ consists of all the edges $v_iv_{i+1}$ for all $i$ in $[n-1]-\{\alpha_1,\alpha_1+\alpha_2,\dots,\alpha_1+\dots+\alpha_{\ell(\alpha)-1}\}$.

That is, $P_\alpha$ has edge set $\{v_iv_{i+1}\mid i\in\textup{set}(\alpha^c)\}$.
\end{proof}

\begin{example}\label{ex:pathset}
Take $n=6$ with $\alpha = (2,3,1)\vDash 6$. Then $\textup{set}(\alpha)=\{2,5\}$, and so $\textup{set}(\alpha^c)=\{1,2,3,4,5\}-\textup{set}(\alpha)=\{1,3,4\}$.

As labelled graphs, $N_6 + \{v_1v_2,v_3v_4,v_4v_5\}=P_2\mid P_3\mid P_1=P_{(2,3,1)}$.
\end{example}

We now turn to prove our expansion for weighted paths.

\begin{proposition}\label{prop:wted}
For any composition $\alpha$,  the extended chromatic symmetric function of the weighted path with weights given, in order, by $\alpha$ is 
\begin{equation}\label{eq:wted}
    X_{(P_{\ell(\alpha)}, \alpha)} = \sum_{\beta \succcurlyeq \alpha^c} (-1)^{\ell(\alpha^c)-\ell(\beta)} X_{P_{\widetilde \beta}}.
\end{equation}\end{proposition}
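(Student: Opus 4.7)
The plan is to derive \eqref{eq:wted} as a direct application of Theorem~\ref{the:incexc} together with Lemma~\ref{lem:comp}, with no need for induction or deletion-contraction. Set $n = |\alpha|$ and consider the fully refined expansion $(H,(1^n))$ of $(P_{\ell(\alpha)},\alpha)$, where $H$ is the labelled graph on $n$ vertices $a_1,\dots,a_n$ with edge set $\{a_ja_{j+1}\mid j\in\mathrm{set}(\alpha)\}$. Applying Lemma~\ref{lem:comp} with $\alpha^c$ in place of $\alpha$ and noting $(\alpha^c)^c=\alpha$ shows that $H$ is precisely the labelled graph $P_{\alpha^c}$. The two conditions of Definition~\ref{def:expansion} are immediate: $(1^n)\preccurlyeq \alpha$, and the added edges connect exactly the last vertex of $R(v_i)$ to the first vertex of $R(v_{i+1})$, thereby realizing precisely the edges of $P_{\ell(\alpha)}$ and no others.

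Next I would take $E'=\{a_ja_{j+1}\mid j\in\mathrm{set}(\alpha^c)\}$. Since the union $\mathrm{set}(\alpha)\sqcup\mathrm{set}(\alpha^c)=[n-1]$, the connected components of $(V(H),E')$ are exactly the maximal blocks of consecutively indexed $a_j$'s whose intervening positions lie in $\mathrm{set}(\alpha^c)$; these blocks are $R(v_1),\dots,R(v_{\ell(\alpha)})$. Thus the hypothesis of Theorem~\ref{the:incexc} is met, and the theorem yields
$$X_{(P_{\ell(\alpha)},\alpha)}=\sum_{S\subseteq E'}(-1)^{|S|}X_{(H+S,(1^n))}.$$

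To finish, I would parametrize $S\subseteq E'$ by the subset $T\subseteq\mathrm{set}(\alpha^c)$ of its edge indices. Then $H+S$ is the simple labelled graph whose edge set corresponds to positions $\mathrm{set}(\alpha)\cup T=[n-1]\setminus(\mathrm{set}(\alpha^c)\setminus T)$, so Lemma~\ref{lem:comp} identifies it with $P_\gamma$ for the unique composition $\gamma\vDash n$ satisfying $\mathrm{set}(\gamma)=\mathrm{set}(\alpha^c)\setminus T$. The bijection $S\leftrightarrow T\leftrightarrow\gamma$ puts the subsets of $E'$ in bijection with the coarsenings $\gamma\succcurlyeq\alpha^c$, and a count of descent sets gives
$$|S|=|T|=|\mathrm{set}(\alpha^c)|-|\mathrm{set}(\gamma)|=\ell(\alpha^c)-\ell(\gamma).$$
Since $X_{(H+S,(1^n))}=X_{P_\gamma}=X_{P_{\widetilde\gamma}}$, substituting into the inclusion-exclusion identity delivers \eqref{eq:wted} term by term.

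The only real obstacle is bookkeeping: one must keep the three parallel correspondences (subsets $S$ of $E'$, subsets $T$ of $\mathrm{set}(\alpha^c)$, and coarsenings $\gamma$ of $\alpha^c$) consistently aligned so that the signs and underlying partitions match; once the expansion $(P_{\alpha^c},(1^n))$ and the edge set $E'$ are chosen, Theorem~\ref{the:incexc} and Lemma~\ref{lem:comp} do all the combinatorial work.
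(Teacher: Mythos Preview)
Your proof is correct and follows essentially the same approach as the paper: both choose the expansion $(P_{\alpha^c},(1^{|\alpha|}))$ of $(P_{\ell(\alpha)},\alpha)$, take $E'$ to be the edges at positions in $\mathrm{set}(\alpha^c)$, apply Theorem~\ref{the:incexc}, and then reindex the resulting sum over subsets by coarsenings of $\alpha^c$ via Lemma~\ref{lem:comp}. The only cosmetic difference is that you index by the added edges $T\subseteq\mathrm{set}(\alpha^c)$ whereas the paper indexes by the full edge set $J=\mathrm{set}(\alpha)\cup T$; the two bijections and sign computations are identical.
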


\begin{proof}
By Definition~\ref{def:expansion},  {one} expansion of $(P_{\ell(\alpha)},\alpha)$ is the graph on $|\alpha|$ vertices $v_1,\dots,v_{|\alpha|}$, each with weight $1$, with edge set $\{v_iv_{i+1}\mid i\in\textup{set}(\alpha)\}$. By Lemma \ref{lem:comp}, this graph is $(P_{\alpha^c},\finrev{(1^{|\alpha|})})$. Taking $E'=\{v_iv_{i+1}\mid i\in [|\alpha|-1]-\textup{set}(\alpha)\}$ satisfies the conditions of Theorem~\ref{the:incexc}, and so we obtain
\begin{align*}
    X_{(P_{\ell(\alpha)},\alpha)} &= \sum_{S\subseteq E'} (-1)^{|S|} X_{P_{\alpha^c}+S}\\
    &=\sum_{\textup{set}(\alpha)\subseteq J \subseteq[|\alpha|-1]} (-1)^{|J-\textup{set}(\alpha)|} X_{N_{|\alpha|}+\{v_iv_{i+1}\mid i\in J\}}.
\end{align*}

We can write each $\textup{set}(\alpha)\subseteq J \subseteq[|\alpha|-1]$ as $J=\textup{set}(\beta^c)$ for some composition $\beta\vDash|\alpha|$ satisfying $\beta^c \preccurlyeq \alpha$, or equivalently, $\beta \succcurlyeq \alpha^c$. Then $|J-\textup{set}(\alpha)|=|\textup{set}(\beta^c)-\textup{set}(\alpha)|=|\textup{set}(\alpha^c)-\textup{set}(\beta)|=(\ell(\alpha^c)-1) - (\ell(\beta)-1)$.

Thus we have that $(-1)^{|J-\textup{set}(\alpha)|}=(-1)^{\ell(\alpha^c)-\ell(\beta)}$. Additionally, by Lemma \ref{lem:comp}, we know $N_{|\alpha|}+\{v_iv_{i+1}\mid i\in J\}=P_{\beta}$.

Hence,
\begin{align*}
    X_{(P_{\ell(\alpha)},\alpha)} &= \sum_{\beta \succcurlyeq \alpha^c}(-1)^{\ell(\alpha^c)-\ell(\beta)} X_{P_{\beta}}\\
    &=\sum_{\beta \succcurlyeq \alpha^c}(-1)^{\ell(\alpha^c)-\ell(\beta)} X_{P_{\widetilde \beta}}
\end{align*}
where the last equality follows because $P_\beta = P_{\widetilde\beta}$ as unlabelled graphs.
\end{proof}

Proposition~\ref{prop:wted} is strongly reminiscent of our earlier definition of ribbon Schur functions:
\begin{equation}\label{eq:rintoh}
    r_\alpha = \sum_{\beta \succcurlyeq \alpha} (-1)^{\ell(\alpha)-\ell(\beta)} h_{\widetilde \beta}.
\end{equation}

There exists a  well-known involutory automorphism of $\Sym$ as a graded algebra, known by $\omega$, which takes $h_\lambda\mapsto e_\lambda$ (and vice-versa) for each partition $\lambda$, as well as $r_\alpha \mapsto r_{\alpha^c}$ for each composition $\alpha$. Applying $\omega$ to both sides of \eqref{eq:rintoh} gives us

\begin{equation}\label{eq:etor}
    r_\alpha = \omega(r_{\alpha^c})= \sum_{\beta \succcurlyeq \alpha^c} (-1)^{\ell(\alpha^c)-\ell(\beta)} e_{\widetilde \beta}.
\end{equation}

Let $U: \Sym \to \Sym$ be the unique linear map taking $e_\lambda \mapsto X_{P_\lambda}$ for each   partition $\lambda$, namely, the following.
\begin{align}\label{eq:Umap}
U:\Sym &\rightarrow \Sym\\
e_\lambda &\mapsto X_{P_\lambda} \nonumber
\end{align}
 Note $\{e_\lambda\}_{\lambda\vdash n\ge 0}$ and $\{X_{P_\lambda}\}_{\lambda\vdash n\ge 0}$ are both multiplicative bases of $\Sym$, so $U$ is well-defined and is an automorphism of $\Sym$ as a graded algebra. By Proposition~\ref{prop:wted} and   \eqref{eq:etor}, we have 
 \begin{equation}\label{eq:rtoXP}U(r_\alpha)=X_{(P_{\ell(\alpha)},\alpha)}\end{equation}for every composition $\alpha$. Hence, results on the ribbon Schur functions apply to the extended chromatic symmetric functions of weighted paths.

\begin{corollary}\label{cor:wpbas}
The family $\{X_{(P_{\ell(\lambda)},\lambda)}\}_{\lambda\vdash n\ge0}$ of extended chromatic symmetric functions of weighted paths indexed by partitions forms a basis for $\Sym$.
\end{corollary}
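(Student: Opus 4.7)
The plan is to use the automorphism $U$ of \eqref{eq:Umap} together with the identity \eqref{eq:rtoXP}. Since both $\{e_\lambda\}_{\lambda\vdash n\ge 0}$ and $\{X_{P_\lambda}\}_{\lambda\vdash n\ge 0}$ are bases of $\Sym$ (the latter by Theorem~\ref{the:chr} applied to the nifty family of paths), $U$ is an automorphism of $\Sym$ as a graded algebra and hence maps any basis of $\Sym$ to another basis. Because $U(r_\lambda) = X_{(P_{\ell(\lambda)},\lambda)}$ for every partition $\lambda$, the corollary will follow once I show that the family $\{r_\lambda\}_{\lambda\vdash n\ge 0}$ of ribbon Schur functions indexed by \emph{partitions} is a basis of $\Sym$.

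To establish this, I would unpack the definition $r_\lambda = \sum_{\beta\succcurlyeq\lambda}(-1)^{\ell(\lambda)-\ell(\beta)}h_{\widetilde\beta}$. The only coarsening of $\lambda$ having the same length as $\lambda$ is $\lambda$ itself, and it contributes $h_\lambda$ with coefficient $+1$; every strict coarsening $\beta\succ\lambda$ has $\ell(\beta)<\ell(\lambda)$, so the resulting term $h_{\widetilde\beta}$ is indexed by a partition with strictly fewer parts than $\lambda$. Ordering the partitions of $n$ by length, the transition matrix from $\{r_\lambda\}_{\lambda\vdash n}$ to the basis $\{h_\mu\}_{\mu\vdash n}$ is therefore triangular with unit diagonal, and so invertible. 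A dimension count ($p(n)$ on both sides) then shows that $\{r_\lambda\}_{\lambda\vdash n\ge 0}$ is a basis of $\Sym$, and applying the automorphism $U$ transports this basis to $\{X_{(P_{\ell(\lambda)},\lambda)}\}_{\lambda\vdash n\ge 0}$, giving the claim.

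The argument is essentially mechanical once the automorphism $U$ and the identity \eqref{eq:rtoXP} are in hand, so I do not expect any genuine obstacle. The only substantive point is the triangularity of the $r$-to-$h$ expansion when restricted to partition indices, and this is immediate from the observation that any strict coarsening of a composition strictly decreases its length.
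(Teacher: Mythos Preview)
Your proposal is correct and follows essentially the same approach as the paper: both use that $U$ is a graded algebra automorphism taking $r_\lambda\mapsto X_{(P_{\ell(\lambda)},\lambda)}$, reducing the claim to the fact that $\{r_\lambda\}_{\lambda\vdash n\ge0}$ is a basis of $\Sym$. The only difference is that the paper cites \cite{BTvW} for this last fact, whereas you supply the standard triangularity argument via \eqref{eq:rintoh}; this argument is fine (and indeed the paper alludes to it later in Remark~\ref{rem:matrix}).
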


\begin{proof}
In \cite{BTvW} it is proved that the family of ribbon Schur functions $\{r_\lambda\}_{\lambda \vdash n\ge 0}$ indexed by partitions forms a basis for $\Sym$. Because the linear map $U:\Sym\to\Sym$ takes each $r_\lambda \mapsto  X_{(P_{\ell(\lambda)},\lambda)}$ and is an automorphism of $\Sym$ as a graded algebra, it follows immediately that the family of extended chromatic symmetric functions $\{X_{(P_{\ell(\lambda)},\lambda)}\}_{\lambda\vdash n\ge0}$ of weighted paths indexed by partitions forms a basis for $\Sym$.
\end{proof}

In \cite{BTvW}, Billera, Thomas, and van Willigenburg completely classify when two ribbon Schur functions are equal. We recall a definition and a key result from their paper.

\begin{definition}\label{def:circ}
\cite[Section 3.1]{BTvW}
Given two nonempty compositions $\alpha$ and $\beta$, we define the binary operation $\circ$ by
\begin{equation*}
    \alpha\circ\beta = \beta^{\odot \alpha_1}\cdot\   \cdots\  \cdot\beta^{\odot \alpha_{\ell(\alpha)}}
\end{equation*}
where
\begin{equation*}
    \beta^{\odot i} = \underbrace{\beta \odot\cdots\odot\beta}_i.
\end{equation*}
By \cite[Proposition 3.3]{BTvW}, $\circ$ is associative.
\end{definition}

\begin{example}\label{ex:circ}
Take $\alpha$ and $\beta$ to both be the composition $(1,2)$. Then we have $(1,2)\circ(1,2) = (1,2)^{\odot 1}\cdot (1,2)^{\odot 2} = (1,2,1,3,2)$.
\end{example}

\begin{theorem}\label{the:ribbons}
\cite[Theorem 4.1]{BTvW}
Two nonempty compositions $\alpha$ and $\beta$ satisfy
$r_\alpha = r_\beta$ if and only if for some $\ell$, there exist compositions $\alpha^{(1)},\dots,\alpha^{(\ell)}$ and $\beta^{(1)},\dots,\beta^{(\ell)}$ such that
$$\alpha = \alpha^{(1)} \circ\cdots\circ\alpha^{(\ell)}
\hbox{\rm \quad and \quad}
 \beta = \beta^{(1)} \circ\cdots\circ\beta^{(\ell)}$$
where, for each $i$, either $\beta^{(i)}=\alpha^{(i)}$ or $\beta^{(i)}=(\alpha^{(i)})^r$. We write this equivalence relation as $\alpha\sim\beta$.
\end{theorem}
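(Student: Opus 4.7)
The plan is to split the biconditional into two directions. For the ``if'' direction, I would first establish the classical identity $r_\mu = r_{\mu^r}$ for every composition $\mu$: this follows from observing that the ribbon skew shape of $\mu^r$ is a $180^\circ$ rotation of the ribbon of $\mu$, giving a weight-preserving bijection on semistandard fillings (alternatively, it can be derived from a Jacobi--Trudi-style determinantal expansion of $r_\mu$). Next I would establish the compatibility statement: if $r_{\mu} = r_{\mu'}$ and $r_{\gamma} = r_{\gamma'}$, then $r_{\mu \circ \gamma} = r_{\mu' \circ \gamma'}$. The natural way to do this is to derive an explicit formula for $r_{\mu \circ \gamma}$ as a polynomial in the symbols $r_{\gamma^{\odot j}}$ $(j \geq 1)$ whose coefficients depend only on $\mu$, obtained by repeated application of the multiplication rule $r_\sigma r_\tau = r_{\sigma \cdot \tau} + r_{\sigma \odot \tau}$ from~\eqref{eq:ribMult}. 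Combined with reversal invariance, the ``if'' direction then follows by induction on $\ell$.

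For the ``only if'' direction, the strategy is to use a unique factorization theorem for compositions under $\circ$, modulo reversal of factors. Call a composition $\mu$ of length at least $2$ \emph{$\circ$-irreducible} if it admits no nontrivial factorization $\mu = \mu_1 \circ \mu_2$ with both $\mu_1, \mu_2$ of length at least $2$. I would first prove the purely combinatorial claim that every composition admits a factorization into $\circ$-irreducibles, and that any two such factorizations of the same composition differ only by replacing individual factors with their reversals. This can be approached by induction on $|\mu|$, analysing which elements of $\textup{set}(\mu)$ support a $\circ$-factorization and showing that the irreducible ``atoms'' are determined up to reversal. I would then prove the key rigidity statement: if $\mu$ is $\circ$-irreducible and $r_\mu = r_\nu$, then $\nu \in \{\mu, \mu^r\}$. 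Granting rigidity, the ``only if'' direction follows by applying unique factorization to both $\alpha$ and $\beta$, matching irreducible factors one by one using rigidity, and invoking the ``if'' direction at each step to ensure consistency.

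The main obstacle is the rigidity statement for $\circ$-irreducibles, which is the deep combinatorial heart of the theorem. To attack it, I would look for a specialization of the variables $x_i$ (for instance, to a geometric progression $x_i = q^{i-1}$, or restriction to finitely many variables) that converts the identity $r_\mu = r_\nu$ into a sharp combinatorial constraint pinning down $\nu$. A complementary approach is to translate $r_\mu = r_\nu$ into the equality of skew Schur tableau counts $s_{\mu/\emptyset} = s_{\nu/\emptyset}$ for every partition shape, and exploit $\circ$-irreducibility to rule out all $\nu \notin \{\mu, \mu^r\}$, perhaps by identifying a ``smallest'' disagreement forced by a nontrivial irreducible candidate. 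Either route requires the most delicate work, since it has to thread through the recursive structure of $\circ$ without circularity; in particular, one must ensure that the inductive hypothesis on smaller compositions is used only to decompose $\nu$, never to presume its shape.
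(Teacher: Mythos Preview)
The paper does not prove this theorem: it is quoted from \cite{BTvW} (note the citation immediately after the \texttt{\textbackslash begin\{theorem\}}) and used as a black box, most notably in the proof of Theorem~\ref{the:wpclass}. There is therefore no proof in this paper to compare your proposal against.

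That said, a few remarks on the proposal itself, since it roughly follows the architecture of the original \cite{BTvW} argument.

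For the ``if'' direction, the compatibility statement you aim for (``$r_\mu=r_{\mu'}$ and $r_\gamma=r_{\gamma'}$ imply $r_{\mu\circ\gamma}=r_{\mu'\circ\gamma'}$'') is stronger than what the theorem requires and is essentially equivalent to the full theorem, so proving it directly is circular. What actually follows from the multiplication rule~\eqref{eq:ribMult} is that the map $h_j\mapsto r_{\gamma^{\odot j}}$ is an algebra homomorphism sending $r_\mu\mapsto r_{\mu\circ\gamma}$; hence $r_{\mu\circ\gamma}$ depends only on $r_\mu$ and on $\gamma$ (not on $r_\gamma$). To change $\gamma$ to $\gamma^r$ one uses separately that $(\gamma^r)^{\odot j}=(\gamma^{\odot j})^r$, so that $r_{(\gamma^r)^{\odot j}}=r_{\gamma^{\odot j}}$.

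For the ``only if'' direction, your definition of $\circ$-irreducible (no factorization into two pieces each of length $\geq 2$) is not the one that makes unique factorization work; compare Definition~\ref{def:factorization}. Under your definition, $(1^2)$ and $(1^3)$ are both irreducible, yet $(1^6)=(1^2)\circ(1^3)=(1^3)\circ(1^2)$ gives two distinct irreducible factorizations not related by reversing factors. The \cite{BTvW} fix is to declare factorizations where both factors have all parts equal to $1$ (or both have length $1$) trivial as well; only with this adjustment does Theorem~\ref{the:irr} hold. Finally, you correctly identify the rigidity step for irreducibles as the crux, but the approaches you sketch (principal specialization, tableau counts) are not the route taken in \cite{BTvW}, and no concrete argument is given; this remains a genuine gap in the proposal.
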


We are now able to classify when two weighted paths have equal extended chromatic symmetric functions. \SvW{This theorem could also be proved via $\mathcal{L}$-polynomials \cite{Jose2}, however, our proof is direct and manifestly positive.}

\begin{theorem}\label{the:wpclass}
Two nonempty compositions $\alpha$ and $\beta$ satisfy
$$X_{(P_{\ell(\alpha)},\alpha)}=X_{(P_{\ell(\beta)},\beta)}\mbox{ if and only if }\alpha\sim\beta.$$
\end{theorem}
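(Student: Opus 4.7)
The plan is to leverage the machinery already developed in the excerpt, reducing the statement to the known classification of equal ribbon Schur functions. The crucial ingredient is equation~\eqref{eq:rtoXP}, which gives $U(r_\alpha) = X_{(P_{\ell(\alpha)},\alpha)}$ for every composition $\alpha$, together with the fact that $U$ is a graded algebra automorphism of $\Sym$, hence in particular injective.

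First I would simply observe the chain of equivalences: for nonempty compositions $\alpha$ and $\beta$,
\begin{equation*}
X_{(P_{\ell(\alpha)},\alpha)} = X_{(P_{\ell(\beta)},\beta)} \iff U(r_\alpha) = U(r_\beta) \iff r_\alpha = r_\beta,
\end{equation*}
where the first equivalence is \eqref{eq:rtoXP} and the second is injectivity of the automorphism $U$. Then an appeal to Theorem~\ref{the:ribbons} of Billera, Thomas, and van Willigenburg finishes the argument by identifying $r_\alpha = r_\beta$ with the equivalence relation $\alpha\sim\beta$.

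There is essentially no obstacle to overcome here: the hard work was already accomplished in establishing that $U$ is a well-defined automorphism (invoked via the multiplicativity of both bases $\{e_\lambda\}$ and $\{X_{P_\lambda}\}$ together with the nifty-family basis result Theorem~\ref{the:chr}), in proving the expansion formula Proposition~\ref{prop:wted} that makes \eqref{eq:rtoXP} valid, and in the deep classification Theorem~\ref{the:ribbons}. All that remains is the formal transport of an equality across an isomorphism. If anything warrants care, it is just noting that injectivity of $U$ on ribbon Schur functions is automatic from $U$ being an algebra automorphism of $\Sym$, so no separate verification is required. Therefore the proof will be extremely short---a matter of a few lines---and should be presented as such.
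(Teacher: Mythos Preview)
Your proposal is correct and follows essentially the same approach as the paper: both arguments invoke \eqref{eq:rtoXP} to rewrite the equality of extended chromatic symmetric functions as $U(r_\alpha)=U(r_\beta)$, use the injectivity of the automorphism $U$ to reduce to $r_\alpha=r_\beta$, and then appeal to Theorem~\ref{the:ribbons}. The paper's proof is indeed just a few lines, exactly as you anticipate.
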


\begin{proof}
Because $U:\Sym\to\Sym$ is an automorphism of $\Sym$ as a graded algebra (in particular, it is injective) and takes each $r_\alpha\mapsto X_{(P_{\ell(\alpha)},\alpha)}$, we have $$U(r_\alpha)=X_{(P_{\ell(\alpha)},\alpha)}=X_{(P_{\ell(\beta)},\beta)}=U(r_\beta)$$if and only if $r_\alpha=r_\beta$. By Theorem~\ref{the:ribbons}, the result follows.
\end{proof}

\begin{example}\label{ex:wpclass}
We saw earlier that $X_{(P_5,(1,2,1,3,2))}=X_{(P_5,(1,3,2,1,2))}$. Note $(1,2,1,3,2)\sim(1,3,2,1,2)$, since $(1,2,1,3,2)=(1,2)\circ(1,2)$, while $(1,3,2,1,2)=(2,1)\circ(1,2)$.
\end{example}

With a little more work, we can deduce the exact number of nonisomorphic weighted paths in each equivalence class of weighted paths with equal extended chromatic symmetric functions. To that end, we present one more definition and one more theorem from \cite{BTvW}.

\begin{definition}\cite[Section 3.2]{BTvW}\label{def:factorization}
If a composition $\alpha$ is written in the form $\alpha^{(1)}\circ\cdots\circ\alpha^{(\ell)}$ then we call this a \textit{factorization} of $\alpha$. A factorization $\alpha=\beta\circ\gamma$ is \textit{trivial} if any of the following hold
\begin{enumerate}[1.]
    \item one of $\beta,\gamma$ is the composition $(1)$,
    \item the compositions $\beta,\gamma$ both have length $1$, or
    \item the compositions $\beta,\gamma$ both have all parts equal to $1$.
\end{enumerate}
Finally, a factorization $\alpha=\alpha^{(1)}\circ\cdots\circ\alpha^{(\ell)}$ is \textit{irreducible} if no $\alpha^{(i)}\circ\alpha^{(i+1)}$ is a trivial factorization, and all factorizations of each $\alpha^{(i)}$ into two compositions are trivial.
\end{definition}

\begin{theorem}\label{the:irr}
\cite[Theorem 3.6]{BTvW}
Every nonempty composition admits a unique irreducible factorization.
\end{theorem}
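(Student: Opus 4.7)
The plan is to treat existence and uniqueness separately, with existence by a straightforward induction on size and uniqueness by a cancellation-style argument for $\circ$. For existence, I would induct on $|\alpha|$. The base $\alpha=(1)$ is its own irreducible factorization. For $|\alpha|>1$, either every two-factorization $\alpha=\beta\circ\gamma$ is trivial, in which case the length-one factorization $\alpha$ is itself irreducible (condition~1 of Definition~\ref{def:factorization} is vacuous for a one-factor product, and condition~2 is precisely the hypothesis), or there is a nontrivial $\alpha=\beta\circ\gamma$ with $|\beta|,|\gamma|<|\alpha|$. Induction then supplies irreducible factorizations of $\beta$ and $\gamma$, whose concatenation is a candidate factorization of $\alpha$. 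If any adjacent pair $\alpha^{(i)}\circ\alpha^{(i+1)}$ in this concatenation happens to form a trivial factorization, I would collapse the offending pair and iterate; termination follows because the total number of factors strictly decreases at each reduction step.

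For uniqueness, suppose $\alpha=\alpha^{(1)}\circ\cdots\circ\alpha^{(\ell)}=\beta^{(1)}\circ\cdots\circ\beta^{(m)}$ are both irreducible. The crucial multiplicative invariants of $\circ$ are that $|\alpha\circ\beta|=|\alpha|\cdot|\beta|$ and $\ell(\alpha\circ\beta)=|\alpha|(\ell(\beta)-1)+\ell(\alpha)$, both of which extend inductively to multi-factor products. My strategy is to show that the leftmost irreducible factor is determined by $\alpha$: from the explicit form
$$\alpha=(\alpha^{(2)}\circ\cdots\circ\alpha^{(\ell)})^{\odot\alpha^{(1)}_1}\cdot(\alpha^{(2)}\circ\cdots\circ\alpha^{(\ell)})^{\odot\alpha^{(1)}_2}\cdots(\alpha^{(2)}\circ\cdots\circ\alpha^{(\ell)})^{\odot\alpha^{(1)}_{\ell(\alpha^{(1)})}},$$
one should be able to read off $\alpha^{(1)}$ from the pattern of concatenations versus near-concatenations visible in $\alpha$, together with the size/length invariants above. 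Combined with a left-cancellation property for $\circ$ (that $\eta\circ\mu=\eta\circ\mu'$ forces $\mu=\mu'$ whenever $\eta\neq(1)$, proved by comparing block sizes), this yields $\alpha^{(1)}=\beta^{(1)}$; we then delete the first factor on both sides and induct on the total number of irreducible factors.

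The main obstacle is the uniqueness step, specifically pinning down that the leftmost irreducible factor is determined by $\alpha$. The three flavours of trivial factorization in Definition~\ref{def:factorization}---a factor equal to $(1)$, both factors having length $1$, or both factors consisting entirely of $1$s---each introduce genuine ambiguities that must be ruled out separately. The subtlest cases will be nearly homogeneous compositions such as $(k^n)$ and compositions of length $1$, since for these the ``reading off'' of $\alpha^{(1)}$ from the internal structure of $\alpha$ is least canonical; indeed, condition~3 of the definition of trivial factorizations exists precisely to eliminate the ambiguity in this regime, and a careful case analysis around these boundary compositions is where I expect the bulk of the technical difficulty to lie.
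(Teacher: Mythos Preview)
The paper does not supply a proof of this theorem: it is quoted verbatim as \cite[Theorem 3.6]{BTvW} and used as a black box, so there is no ``paper's own proof'' to compare against. Any comparison must be to the original argument in \cite{BTvW}.

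That said, your sketch is a plausible route but is not yet a proof, and it diverges from the approach in \cite{BTvW}. In \cite{BTvW} the authors encode compositions of $n$ as binary strings of length $n$ beginning with $0$ (equivalently, via $\mathrm{set}(\alpha)$), and show that under this encoding the operation $\circ$ becomes a simple substitution of one string into another. Unique irreducible factorization then reduces to a unique-factorization statement about binary strings under substitution, which they handle with a direct combinatorial argument. Your proposal instead works intrinsically with compositions, relying on the multiplicativity of size and the recursive description of $\alpha\circ\beta$ in terms of $\cdot$ and $\odot$.

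On your sketch itself: the existence half is fine. For uniqueness, the real content is exactly the step you flag as the ``main obstacle'': showing that the leftmost irreducible factor is determined by $\alpha$. Left cancellation for $\circ$ is straightforward (compare blocks of size $|\gamma|$ in $\eta\circ\gamma$), but right cancellation and the claim that $\alpha^{(1)}$ can be ``read off'' from $\alpha$ are not, and your outline does not yet supply an argument for them. In particular, if $\alpha^{(1)}\circ\gamma=\beta^{(1)}\circ\delta$ with $|\alpha^{(1)}|\neq|\beta^{(1)}|$, you need to produce a common refinement and argue that irreducibility of one of $\alpha^{(1)},\beta^{(1)}$ is violated; this is where the three clauses of Definition~\ref{def:factorization} must be invoked carefully, and the boundary cases $(k)$ and $(1^k)$ genuinely require separate treatment. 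As written, the proposal identifies the right difficulties but does not resolve them; it should be regarded as a proof plan rather than a proof.
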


\begin{example}\label{ex:irr}
The unique irreducible factorization of $(1,2,1,3,2)$ is $(1,2)\circ (1,2)$, since it is not trivial, and the only factorizations of $(1,2)$ into two are the trivial factorizations $(1,2)=(1)\circ(1,2)$ and $(1,2)=(1,2)\circ(1)$.
\end{example}

Thus Billera, Thomas, and van Willigenburg were also able to show in \cite[Theorem 4.1]{BTvW}, that the equivalence class of a nonempty composition $\alpha$ under the equivalence relation $\sim$ contains $2^m$ elements, where $m$ is the number of nonsymmetric (under reversal) terms in the irreducible factorization of $\alpha$.

\begin{corollary}\label{cor:irr} Let $\alpha$ be a nonempty composition with $m$ nonsymmetric terms in its irreducible factorization. If $\alpha \neq \alpha ^r$, then, up to isomorphism, the number of weighted paths $(P, w)$ such that $$X_{(P,w)}= X_{({P_{\ell(\alpha)}},\alpha)}$$is $2^{m-1}$. Otherwise, if $\alpha = \alpha ^r$ then $X_{(P,w)}= X_{(P_{\ell(\alpha)},\alpha)}$ if and only if $(P,w) = (P_{\ell(\alpha)},\alpha)$.
\end{corollary}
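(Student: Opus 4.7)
The plan is to reduce the question to counting orbits of the reversal involution on the $\sim$-class of $\alpha$, and then to analyze these orbits through the unique irreducible factorization of $\alpha$. By Theorem~\ref{the:wpclass}, a weighted path $(P, w)$ satisfies $X_{(P,w)} = X_{(P_{\ell(\alpha)}, \alpha)}$ if and only if $(P, w) = (P_{\ell(\beta)}, \beta)$ for some $\beta \sim \alpha$. Since a path admits only two orderings of its vertices (left-to-right and its reverse), two weighted paths $(P_{\ell(\beta)}, \beta)$ and $(P_{\ell(\gamma)}, \gamma)$ are isomorphic if and only if $\gamma \in \{\beta, \beta^r\}$. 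Thus the quantity to compute is the number of orbits of the involution $\beta \mapsto \beta^r$ acting on the $\sim$-class of $\alpha$.

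My first step would be to establish the identity $(\alpha \circ \beta)^r = \alpha^r \circ \beta^r$ by direct computation from Definition~\ref{def:circ}: the reversal of a concatenation reverses both the list of factors and each factor, and $(\beta^{\odot k})^r = (\beta^r)^{\odot k}$; the two reversals combine so that the outer ordering is also reversed, which is exactly the effect of replacing $\alpha$ by $\alpha^r$ in the outer index. Given this identity, if $\alpha = \alpha^{(1)} \circ \cdots \circ \alpha^{(\ell)}$ is the unique irreducible factorization guaranteed by Theorem~\ref{the:irr}, and $\beta = \beta^{(1)} \circ \cdots \circ \beta^{(\ell)}$ is any element of the $\sim$-class with each $\beta^{(i)} \in \{\alpha^{(i)}, (\alpha^{(i)})^r\}$, then iterating the identity yields $\beta^r = (\beta^{(1)})^r \circ \cdots \circ (\beta^{(\ell)})^r$. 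Hence reversal acts on the $\sim$-class coordinate-wise by swapping $\beta^{(i)}$ with $(\beta^{(i)})^r$, confirming that the involution stays inside the class.

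Next I would count fixed points. An element $\beta$ of the class is fixed by reversal exactly when each $\beta^{(i)}$ is symmetric; but for every nonsymmetric $\alpha^{(i)}$, the two allowed choices $\alpha^{(i)}$ and $(\alpha^{(i)})^r$ are themselves both nonsymmetric, so no fixed points exist unless $m = 0$. When $m \geq 1$, the $2^m$ elements of the $\sim$-class (guaranteed by \cite[Theorem~4.1]{BTvW}) pair up freely under reversal into $2^{m-1}$ orbits of size two, yielding $2^{m-1}$ isomorphism classes of weighted paths. When $m = 0$, every $\alpha^{(i)}$ is symmetric, so $\alpha^r = \alpha$ and the $\sim$-class contains only $\alpha$, matching the second clause of the statement. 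This simultaneously shows that the stated dichotomy $\alpha \neq \alpha^r$ versus $\alpha = \alpha^r$ coincides with $m \geq 1$ versus $m = 0$.

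The main obstacle is the compatibility identity $(\alpha \circ \beta)^r = \alpha^r \circ \beta^r$, since without it one cannot transfer the reversal involution on compositions to the factorization picture; once this identity is in hand the rest is bookkeeping.
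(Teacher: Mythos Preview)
Your argument is correct and follows the same overall architecture as the paper: reduce to counting orbits of reversal on the $\sim$-class of $\alpha$, then use the $2^m$ count from \cite{BTvW}. The difference lies in how the absence of symmetric compositions in the class (when $\alpha\neq\alpha^r$) is established. The paper simply cites \cite[Proposition~3.6]{Jose2}, which states directly that a symmetric composition has a singleton $\sim$-class; the contrapositive then says that if $\alpha\neq\alpha^r$, no element of its class is symmetric, and the $2^{m-1}$ count follows by halving. You instead derive this internally by proving the compatibility identity $(\alpha\circ\beta)^r=\alpha^r\circ\beta^r$ and then reading off that reversal acts coordinatewise on irreducible factorizations, so a fixed point would force every factor to be symmetric. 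Your route is more self-contained (it does not invoke the Aliste-Prieto--Zamora result) and makes explicit why the dichotomy $\alpha=\alpha^r$ versus $\alpha\neq\alpha^r$ coincides exactly with $m=0$ versus $m\geq 1$; the paper's route is shorter because it outsources that step. One small point worth making explicit in your write-up: the step ``$\beta=\beta^r$ forces $\beta^{(i)}=(\beta^{(i)})^r$ for all $i$'' uses the uniqueness of irreducible factorizations (Theorem~\ref{the:irr}), together with the observation that reversing each factor of an irreducible factorization yields another irreducible factorization.
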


\begin{proof} 
\SvW{If $\alpha=\alpha^r$, then the equivalence class of $\alpha$ under $\sim$ contains only itself \cite[Proposition 3.6] {Jose2}, and so $X_{(P,w)}=X_{(P_{\ell(\alpha)},\alpha)}$ if and only if $(P,w)=(P_{\ell(\alpha)},\alpha)$ by Theorem~\ref{the:wpclass}.}

\SvW{Therefore when $\alpha \neq \alpha ^r$, it has no symmetric compositions in its equivalence class. Thus the first part of the statement of the corollary follows immediately from the fact that there are $2^m$ elements in the equivalence class of $\alpha$ under $\sim$, and by noting that each weighted path $(P_{\ell(\alpha)},\alpha)$ is isomorphic to its reversal $(P_{\ell(\alpha^r)},\alpha^r)$.}
\end{proof}

\section{Extended chromatic bases for $\Sym$}\label{sec:bases}
In this section we describe two new ways to generate bases for the algebra $\Sym$ of symmetric functions from the extended chromatic symmetric functions of weighted graphs. One is a generalization of Cho and van Willigenburg's result given in Theorem~\ref{the:chr}, and the other generalizes our weighted path basis found in Corollary~\ref{cor:wpbas}.

The following proposition will be useful.

\begin{proposition}\label{prop:pwr}
\cite[Lemma 3]{CS}
Given a weighted graph $(G,w)$, we can expand it into the power sum symmetric functions via
\begin{equation*}
    X_{(G,w)}=\sum_{S\subseteq E(G)} (-1)^{|S|}p_{\lambda((V(G),S),w)}
\end{equation*}
where for a weighted graph $(G,w)$, the   partition $\lambda(G,w)$ is the partition whose parts are the sums of the vertex weights of each connected component of $(G,w)$.
\end{proposition}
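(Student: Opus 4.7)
The plan is to mimic Stanley's original argument for $X_G$ (indeed Theorem 2.5 in \cite{Stan95}), adapted to the weighted setting by tracking the vertex-weight exponents throughout. The whole proof is essentially a single inclusion-exclusion on the set of ``bad events,'' one for each edge, recording when the endpoints receive the same colour.

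First, I would rewrite $X_{(G,w)}$ by relaxing the propriety constraint. For any edge $e=uv\in E(G)$ and any colouring $\kappa:V(G)\to\mathbb{Z}^+$ (not necessarily proper), introduce the indicator $[\kappa(u)=\kappa(v)]$. Then a colouring is proper exactly when $\prod_{e=uv\in E(G)}(1-[\kappa(u)=\kappa(v)])=1$, so expanding this product and interchanging summations gives
\begin{equation*}
X_{(G,w)}=\sum_{\kappa} \prod_{e=uv\in E(G)}\bigl(1-[\kappa(u)=\kappa(v)]\bigr)\prod_{v\in V(G)} x_{\kappa(v)}^{w(v)}=\sum_{S\subseteq E(G)}(-1)^{|S|}\,T_S,
\end{equation*}
where $T_S=\sum_{\kappa} \bigl(\prod_{e=uv\in S}[\kappa(u)=\kappa(v)]\bigr)\prod_{v} x_{\kappa(v)}^{w(v)}$ is the monomial sum over all colourings that are monochromatic on every edge of $S$.

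Next, I would identify $T_S$ with the claimed power sum. The colourings counted in $T_S$ are exactly those that are constant on each connected component of the graph $(V(G),S)$. If the connected components are $C_1,\dots,C_k$ and $\mu_i=\sum_{v\in C_i}w(v)$, then choosing a colour $c_i$ independently for each $C_i$ yields the monomial $\prod_i x_{c_i}^{\mu_i}$, so
\begin{equation*}
T_S=\prod_{i=1}^k\sum_{c_i\ge 1} x_{c_i}^{\mu_i}=\prod_{i=1}^k p_{\mu_i}=p_{\lambda((V(G),S),w)},
\end{equation*}
using that $\lambda((V(G),S),w)=(\mu_1,\dots,\mu_k)^{\sim}$ by definition. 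Substituting back gives the desired identity.

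There is essentially no obstacle here: the argument is a direct adaptation of Stanley's unweighted proof, and the only new ingredient is that the exponent accompanying each colour in a monochromatic component is the \emph{sum} of the weights in that component, which is precisely what $\lambda((V(G),S),w)$ records. The only point deserving a little care is the interchange of the (formal) infinite colour sum with the finite product over components, which is justified because each component contributes only finitely many variables of any given degree to each homogeneous piece.
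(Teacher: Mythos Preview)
Your proof is correct and is the standard inclusion-exclusion argument; however, the paper does not give its own proof of this proposition. It is quoted as \cite[Lemma~3]{CS} and used as a background result, with the remark that it is the natural generalization of Stanley's \cite[Theorem~2.5]{Stan95} to the weighted setting---precisely the adaptation you carry out.
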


This proposition is the natural generalization of \cite[Theorem 2.5]{Stan95} by Stanley, from $X_G$ to $X_{(G,w)}$. Our next result is the natural generalization of \cite[Theorem 2.6]{Stan95} by Stanley, from $X_G$ to $X_{(G,w)}$ and requires the following definitions.

Given a set partition $\pi = \{ S_1, \dots , S_{\ell(\pi)}\}$ of the vertices of a graph $G$, we say that $\pi$ is \emph{connected} if the restriction of $G$ to each \emph{block} $S_i$ for $1\leq i \leq \ell(\pi)$ is connected.  The \emph{lattice of contractions} of $G$, denoted by $L_G$, is the set of all connected set partitions of $G$, partially ordered by refinement $\leq$. For any $\pi \in L_G$ we have \rev{(for example by  \cite[Equation 1]{Stan95}),}
$$(-1) ^{|V(G)| - \ell(\pi)}\mu (\hat{0}, \pi) >0$$where $\mu$ is the M\"{o}bius function of $L_G$ \rev{and  $\hat{0}$ is the unique minimal element of $L_G$ with each vertex in its own block.} Given a weighted graph $(G,w)$ and a connected set partition of $G$, define $\type(\pi, w)$ to be the partition $\lambda$ of \rev{$\sum _{v\in V(G)} w(v)$ whose parts are the total sums of the weights of each block of $\pi$.}

\begin{proposition}\label{prop:mup}
Given a weighted graph $(G,w)$, we can expand it into the power sum symmetric functions via
\begin{equation*}
X_{(G,w)}=\sum_{\pi \in L_G} \mu (\hat{0}, \pi) p_{\type(\pi, w)}.
\end{equation*} 
\end{proposition}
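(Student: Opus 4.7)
The plan is to start from Proposition~\ref{prop:pwr} and reindex the edge-subset sum by the set partition of $V(G)$ it induces. For $S \subseteq E(G)$, let $\pi(S)$ denote the set partition of $V(G)$ whose blocks are the connected components of $(V(G), S)$. Each such block is connected in $(V(G), S)$, hence connected in $G$, so $\pi(S) \in L_G$. Moreover, $\lambda((V(G),S), w)$ depends only on $\pi(S)$: it equals $\type(\pi(S), w)$. Grouping the sum in Proposition~\ref{prop:pwr} by the value of $\pi(S)$ therefore gives
$$X_{(G,w)} \;=\; \sum_{\pi \in L_G} p_{\type(\pi, w)} \, f(\pi), \qquad f(\pi) \;:=\; \sum_{\substack{S \subseteq E(G)\\ \pi(S) = \pi}} (-1)^{|S|}.$$
Comparing with the desired identity, it suffices to prove that $f(\pi) = \mu(\hat 0, \pi)$ for every $\pi \in L_G$.

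For this I would apply M\"obius inversion on $L_G$. Fix $\pi \in L_G$ and let $E_G(\pi) \subseteq E(G)$ denote the set of edges whose endpoints lie in a common block of $\pi$. A subset $S \subseteq E(G)$ satisfies $\pi(S) \leq \pi$ if and only if every connected component of $(V(G), S)$ is contained in a block of $\pi$, which happens exactly when $S \subseteq E_G(\pi)$. Therefore
$$\sum_{\sigma \leq \pi} f(\sigma) \;=\; \sum_{S \subseteq E_G(\pi)} (-1)^{|S|},$$
which vanishes whenever $E_G(\pi)$ is nonempty. On the other hand, because each block of $\pi \in L_G$ is a connected subgraph of $G$, the condition $E_G(\pi) = \emptyset$ forces every block of $\pi$ to be a singleton, i.e.\ $\pi = \hat 0$; in that case the sum equals $1$. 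Hence $\sum_{\sigma \leq \pi} f(\sigma) = \delta_{\pi, \hat 0}$, and the defining recursion for the M\"obius function of $L_G$ delivers $f(\pi) = \mu(\hat 0, \pi)$, completing the argument.

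The main (modest) obstacle is keeping the bookkeeping airtight: one must verify that every partition arising as $\pi(S)$ actually belongs to $L_G$ (so no terms are dropped when regrouping) and that the equivalence $\pi(S) \leq \pi \iff S \subseteq E_G(\pi)$ is valid. Both points become transparent once one uses that the blocks of any $\pi \in L_G$ are, by definition, connected subgraphs of $G$; no further calculation is needed.
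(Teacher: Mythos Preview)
Your proof is correct and takes a genuinely different route from the paper. The paper follows Stanley's original argument for the unweighted case: it introduces auxiliary functions $X_{(\pi,w)}$ summing over colourings that are constant on blocks of $\pi$ and proper across blocks, observes that every colouring of $G$ contributes to a unique such $X_{(\pi,w)}$, obtains $p_{\type(\sigma,w)} = \sum_{\pi \geq \sigma} X_{(\pi,w)}$, and M\"obius-inverts this to get $X_{(\sigma,w)} = \sum_{\pi \geq \sigma} \mu(\sigma,\pi)\, p_{\type(\pi,w)}$; setting $\sigma = \hat 0$ gives the result. You instead start from Proposition~\ref{prop:pwr}, regroup the edge-subset sum by the induced connected partition, and identify the coefficient $f(\pi)$ with $\mu(\hat 0,\pi)$ via the defining recursion. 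Both proofs use M\"obius inversion on $L_G$, but on different quantities: the paper inverts a relation between the $X_{(\pi,w)}$ and the $p_{\type}$, while you invert the cumulative coefficient sums $\sum_{\sigma \leq \pi} f(\sigma)$.

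Your approach is arguably more economical here: it builds directly on the already-established Proposition~\ref{prop:pwr}, needs no auxiliary colouring functions, and makes the logical link between Propositions~\ref{prop:pwr} and~\ref{prop:mup} explicit (the latter is simply the former regrouped by bond-lattice element). The paper's approach buys a bit more generality, since its M\"obius-inverted formula is valid for every $\sigma \in L_G$, not only $\sigma = \hat 0$, and the intermediate objects $X_{(\pi,w)}$ may be of independent interest.
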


\begin{proof} For a graph $G$ with $N$ vertices and $\pi \in L_G$ define 
$$X_{(\pi, w)}= \sum_\kappa x^{w(v_1)}_{\kappa(v_1)}\cdots x_{\kappa(v_N)}^{w(v_N)}$$to be the sum over all special colourings $\kappa$ such that for $u,v \in V(G)$
\begin{enumerate}[1.]
\item if $u$ and $v$ are in the same block of $\pi$ then $\kappa(u)=\kappa(v)$
\item if $u$ and $v$ are in different blocks of $\pi$ and there is an edge between $u$ and $v$ then $\kappa(u)\neq\kappa(v)$.
\end{enumerate}
Note that \emph{any} colouring $\kappa$ of $G$ contributes \rev{to a unique} $X_{(\pi, w)}$. We can see this by starting with any colouring $\kappa$ and form each block of its partition $\pi$ by colours, so that all vertices of the same colour are in the same block. Then we refine these blocks   further to respect connected components, so that $\pi$ is a connected set partition of $G$.

Next, by the definition of power sum symmetric functions we have for $\sigma \in L_G$ that
$$p_{\type(\sigma, w)} = \sum _{\pi \in L_G \atop \pi \geq \sigma} X_{(\pi,w)}$$and hence by M\"{o}bius inversion
$$X_{(\sigma,w)}=\sum_{\pi \in L_G \atop \pi \geq \sigma} \mu (\sigma, \pi) p_{\type(\pi, w)}.$$Note that when $\sigma = \hat{0}$ the definition of special colouring coincides with that of proper colouring, so $X_{(\hat{0},w)}= X_{(G,w)}$ and the result follows.
\end{proof}

We now extend the definition of a nifty family to weighted graphs.

\begin{definition}\label{def:wnifty}
A family $\{(G_n,w_n)\}_{n\ge1}$ of simple connected weighted graphs is \textit{nifty} if the sum of the vertex weights of each $(G_n,w_n)$ is exactly $n$. Given a nifty family of weighted graphs, we write $(G_\lambda,w_\lambda)$ for a partition $\lambda=(\lambda_1, \dots, \lambda_{\ell(\lambda)})$ to mean the disjoint union  of weighted graphs $(G_{\lambda_1},w_{\lambda_1})\cup\cdots\cup (G_{\lambda_{\ell(\lambda)}},w_{\lambda_{\ell(\lambda)}})$.
\end{definition}

This suggests the following generalization of Theorem~\ref{the:chr}, which specializes to the unweighted case when each $(G_n,w_n)$ has $n$ vertices of weight $1$. This result was noted independently by Chmutov and Shah, who saw it via Hopf algebraic techniques \cite{Chmutovtalk}. Our proof, however, is combinatorial in nature.

\begin{theorem}\label{the:wbase}
Let $\{(G_n,w_n)\}_{n\ge1}$ be a nifty family of weighted graphs. Then $\{X_{(G_\lambda,w_\lambda)}\}_{\lambda\vdash n\ge0}$ is a multiplicative basis for $\Sym$. Moreover, the extended chromatic symmetric functions $\{X_{(G_n,w_n)}\}_{n\ge1}$ are algebraically independent and freely generate $\Sym$.
\end{theorem}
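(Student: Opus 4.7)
The plan is to expand each generator $X_{(G_n,w_n)}$ in the power sum basis using Proposition~\ref{prop:mup} and then leverage a triangularity argument. Specifically, because $(G_n,w_n)$ is connected with total vertex weight $n$, the maximum element $\hat{1}\in L_{G_n}$ (the single-block connected set partition) contributes $\mu(\hat{0},\hat{1})\,p_n$, while every other $\pi\in L_{G_n}$ has $\ell(\pi)\geq 2$ and so contributes a multiple of $p_\mu$ with $\ell(\mu)\geq 2$. Thus
\[
X_{(G_n,w_n)} \;=\; c_n\,p_n \;+\; \sum_{\mu\vdash n,\ \ell(\mu)\geq 2} a_{n,\mu}\,p_\mu,
\]
where $c_n := \mu(\hat{0},\hat{1})$. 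The positivity inequality $(-1)^{|V(G_n)|-\ell(\pi)}\mu(\hat{0},\pi)>0$ recalled just before Proposition~\ref{prop:mup} applied with $\pi=\hat{1}$ guarantees $c_n\neq 0$.

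Next, I will use Proposition~\ref{prop:wprod} iteratively to conclude that $X_{(G_\lambda,w_\lambda)} = \prod_i X_{(G_{\lambda_i},w_{\lambda_i})}$, so the family is multiplicative by construction. Substituting the previous expansion into this product, every non-leading summand involves at least one factor drawn from the tail $\sum_\mu a_{\lambda_i,\mu}p_\mu$, and so yields a $p_\nu$ with $\ell(\nu) > \ell(\lambda)$. Hence
\[
X_{(G_\lambda,w_\lambda)} \;=\; \Bigl(\prod_{i}c_{\lambda_i}\Bigr)\,p_\lambda \;+\; \sum_{\nu\vdash n,\ \ell(\nu)>\ell(\lambda)} b_{\lambda,\nu}\,p_\nu.
\]
Ordering partitions of $n$ by increasing length (with ties broken arbitrarily), the transition matrix from $\{X_{(G_\lambda,w_\lambda)}\}_{\lambda\vdash n}$ to $\{p_\lambda\}_{\lambda\vdash n}$ is upper triangular with nonzero diagonal entries $\prod_i c_{\lambda_i}$, hence invertible. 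Since $\{p_\lambda\}_{\lambda\vdash n}$ is a basis of $\Sym^n$, so is $\{X_{(G_\lambda,w_\lambda)}\}_{\lambda\vdash n}$, establishing the multiplicative basis claim.

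For algebraic independence and free generation I will triangularly invert in $n$. Rearranging the first displayed identity gives $p_n = c_n^{-1}X_{(G_n,w_n)} - c_n^{-1}\sum_{\mu,\ \ell(\mu)\geq 2} a_{n,\mu}\,p_\mu$, and every $p_\mu$ appearing on the right has all parts strictly smaller than $n$. By induction on $n$, each such $p_\mu$ lies in the polynomial algebra generated by $X_{(G_1,w_1)},\dots,X_{(G_{n-1},w_{n-1})}$, so $p_n$ lies in $\mathbb{Q}[X_{(G_1,w_1)},\dots,X_{(G_n,w_n)}]$. Consequently $\{X_{(G_n,w_n)}\}_{n\geq 1}$ generates $\Sym=\mathbb{Q}[p_1,p_2,\ldots]$ as a $\mathbb{Q}$-algebra. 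Comparing graded dimensions (both families contribute exactly one generator in each positive degree) then forces the $X_{(G_n,w_n)}$ to be algebraically independent, hence to freely generate $\Sym$. The only non-routine step is the nonvanishing of $c_n$, which, as noted above, is immediate from the positivity statement recalled before Proposition~\ref{prop:mup}; everything else is bookkeeping around the triangularity.
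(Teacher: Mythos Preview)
Your proof is correct and follows essentially the same approach as the paper's: both arguments use the M\"{o}bius expansion of Proposition~\ref{prop:mup} together with the positivity fact $(-1)^{|V(G)|-\ell(\pi)}\mu(\hat{0},\pi)>0$ to obtain a triangular (by length) expansion of $X_{(G_\lambda,w_\lambda)}$ in the power sum basis with nonzero diagonal, and then deduce algebraic independence from multiplicativity. The only cosmetic difference is that the paper works directly with the lattice $L_{G_\lambda}$ of the disjoint union, whereas you first analyse each $L_{G_n}$ and then multiply via Proposition~\ref{prop:wprod}; your treatment of the free generation step via dimension counting is a bit more explicit than the paper's one-line observation, but the content is the same.
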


\begin{proof}
\rev{Let $\lambda=(\lambda_1, \dots, \lambda_{\ell(\lambda)})$ be a partition. Then $$V=\biguplus_{i=1}^{\ell(\lambda)} V(G_{\lambda _i})$$}is the set \rev{of} vertices of $(G _\lambda, w_\lambda)$. By the definition of $(G _\lambda, w_\lambda)$, we know that if $\pi \in L_{G_\lambda}$, then $\type(\pi, w_\lambda)$ equals $\lambda$ or has more parts than $\lambda$. Thus by Propositions~\ref{prop:pwr} and \ref{prop:mup}    it follows that \rev{there exist constants $c_{\lambda\mu}$ such that}
$$X_{(G _\lambda, w_\lambda)}=\sum_{\mu =  \lambda \text{ or } \ell(\mu) > \ell(\lambda)} c_{\lambda\mu} p_\mu$$and, moreover, that $c_{\lambda\lambda}\neq 0$. Hence, \rev{$\{X_{(G_\lambda,w_\lambda)}\}_{\lambda\vdash n\ge0}$} is a multiplicative basis for $\Sym$.

Since for $\lambda=(\lambda_1, \dots, \lambda_{\ell(\lambda)})$, \SvW{by Proposition~\ref{prop:wprod}, we have that
\begin{equation}\label{eq:Glambda}
X_{(G _\lambda, w_\lambda)}=\prod_{i=1}^{\ell(\lambda)} X_{(G_{\lambda_i}, w_{\lambda _i})}
\end{equation}and $\{ X_{(G _\lambda, w_\lambda)}\} _{\lambda \vdash n\geq 0}$} forms a  multiplicative basis for $\Sym$, every element of $\Sym$ is expressible uniquely as a polynomial in the $X_{(G _n, w_n)}$ and hence   the $X_{(G _n, w_n)}$ are algebraically independent and freely generate $\Sym$.
\end{proof}

We can also give a second method of generating bases of $\Sym$ from the extended chromatic symmetric functions of weighted graphs. 

We have seen that the set of extended chromatic symmetric functions of weighted paths indexed by partitions forms a basis of $\Sym$. A natural question to ask is how might we generalize this result? A reasonable hope might be that for all nifty families $\{G_n\}_{n\ge1}$ of unweighted labelled graphs, the set of functions $\{X_{(G_{\ell(\lambda)},\lambda)}\}_{\lambda\vdash n\ge0}$ forms a basis of $\Sym$.

In fact, we prove something more general.

\begin{theorem}\label{the:wbas}
For each   partition $\lambda$, let $H_\lambda$ be an arbitrary \rev{(not necessarily connected)} simple \finrev{labelled} graph on $\ell(\lambda)$ vertices. Then   $\{X_{(H_\lambda, \lambda)}\}_{\lambda\vdash n\ge 0}$ is a basis for $\Sym$.
\end{theorem}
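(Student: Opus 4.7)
The plan is to expand each $X_{(H_\lambda,\lambda)}$ in the power sum basis via Proposition~\ref{prop:mup}, then establish that the resulting change-of-basis matrix into $\{p_\lambda\}$ is triangular with respect to the length of partitions. Since the number of partitions of $n$ equals $\dim \Sym^n$, it suffices to prove linear independence of $\{X_{(H_\lambda,\lambda)}\}_{\lambda \vdash n}$ within each graded component and then sum over $n$.

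First I would apply Proposition~\ref{prop:mup} to obtain
\[
X_{(H_\lambda, \lambda)} \;=\; \sum_{\pi \in L_{H_\lambda}} \mu(\hat{0},\pi)\, p_{\type(\pi,\lambda)}.
\]
The key observation is to isolate a ``leading'' term: the minimal element $\hat{0} \in L_{H_\lambda}$ has each vertex in its own singleton block (which is trivially connected, so $\hat{0}\in L_{H_\lambda}$ regardless of whether $H_\lambda$ itself is connected), yielding $\type(\hat{0},\lambda)=\lambda$ and contributing $\mu(\hat{0},\hat{0})\,p_\lambda = p_\lambda$. For every other $\pi \in L_{H_\lambda}$, at least one block has more than one vertex, so $\ell(\type(\pi,\lambda)) = \ell(\pi) < \ell(\lambda)$. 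Consequently there exist scalars $c_{\lambda\nu}$ with
\[
X_{(H_\lambda,\lambda)} \;=\; p_\lambda + \sum_{\nu \vdash n,\ \ell(\nu) < \ell(\lambda)} c_{\lambda\nu}\, p_\nu.
\]

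To finish, I would order the partitions of $n$ by strictly decreasing length (breaking ties arbitrarily); the display above then exhibits the change-of-basis matrix from $\{X_{(H_\lambda,\lambda)}\}_{\lambda \vdash n}$ to $\{p_\lambda\}_{\lambda \vdash n}$ as upper unitriangular, hence invertible. Thus $\{X_{(H_\lambda,\lambda)}\}_{\lambda \vdash n}$ is a basis of $\Sym^n$, and taking the union over all $n \ge 0$ gives a basis of $\Sym$. The main (and only) point of care is in correctly identifying the leading term, namely verifying that for every $\pi \ne \hat{0}$ in $L_{H_\lambda}$ one has $\ell(\type(\pi,\lambda)) < \ell(\lambda)$; this follows immediately from the definition of $\type$. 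This argument closely parallels the length-triangularity step in the proof of Theorem~\ref{the:wbase}, but here we do not require the multiplicative structure, because the graphs $H_\lambda$ are chosen independently for each~$\lambda$.
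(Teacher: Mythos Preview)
Your proposal is correct and follows essentially the same triangularity-by-length argument as the paper. The only difference is cosmetic: you expand $X_{(H_\lambda,\lambda)}$ via Proposition~\ref{prop:mup} (the M\"obius formula over $L_{H_\lambda}$), whereas the paper uses Proposition~\ref{prop:pwr} (the edge-subset expansion); in either case the term indexed by $\hat{0}$, respectively $S=\emptyset$, contributes $p_\lambda$ with coefficient~$1$, and every other term has strictly fewer parts, yielding the same unitriangular change-of-basis matrix.
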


\begin{proof}
It suffices to show for each $n$ that $\{X_{(H_\lambda, \lambda)}\}_{\lambda\vdash n}$ is a basis for $\Sym^n$. To that end, we will show that the change of basis matrix describing the $\{X_{(H_\lambda, \lambda)}\}_{\lambda\vdash n}$ in terms of the power sum symmetric functions of degree $n$ is lower triangular with nonzero entries on the diagonal when indices are given a specific order. We will order the indices by the number of parts in a partition. Between partitions of $n$ of the same length, we  order the indices arbitrarily.

Consider Proposition~\ref{prop:pwr}, which describes how to write $X_{(H_\lambda,\lambda)}$ for some $\lambda \vdash n$ in the basis of power sum symmetric functions. When $S=\emptyset$, the term $p_\lambda$ is contributed to the sum. When $S\neq\emptyset$, the graph $(V(H_\lambda),S)$ has fewer than $\ell(\lambda)$ connected components (since there are no loops in $S$), and so a lower-order term is contributed to the expansion.

Hence we can write each $X_{(H_\lambda,\lambda)}$ as the sum of $p_\lambda$ and possible lower-order terms. Thus for each $n$, the matrix expressing the extended chromatic symmetric functions $\{X_{(H_\lambda,\lambda)}\}_{\lambda\vdash n}$ in terms of the power sum symmetric functions $\{p_\lambda\}_{\lambda\vdash n}$ is lower triangular with $1$'s on the diagonal.

Therefore $\{X_{(H_\lambda,\lambda)}\}_{\lambda\vdash n}$ is a basis of $\Sym^n$ for each $n$, and so $\{X_{(H_\lambda,\lambda)}\}_{\lambda\vdash n\ge 0}$ is a basis for $\Sym$.
\end{proof}

\begin{example}\label{ex:matrix}
Corollary~\ref{cor:wpbas} is the case where each $H_\lambda$ is the labelled path $P_{\ell(\lambda)}$ on $\ell(\lambda)$ vertices. Using Proposition~\ref{prop:pwr}, we can compute the entries of the matrix describing the family $\{X_{(P_{\ell(\lambda)},\lambda)}\}_{\lambda\vdash 4}$ in terms of the basis of power sum symmetric functions $\{p_\lambda\}_{\lambda\vdash4}$ of $\Sym^4$.
\begin{equation*}
    \begin{pmatrix}
    X_{(P_1,(4))}\\
    X_{(P_2,(3,1))}\\
    X_{(P_2,(2,2))}\\
    X_{(P_3,(2,1,1))}\\
    X_{(P_4,(1,1,1,1))}\\
    \end{pmatrix}
    =
    \begin{pmatrix}
    1\\
    -1&1 &  & &\\
    -1& &1\\
    1&-1 & -1&1 &\\
    -1&2 & 1&-3 &1\\
    \end{pmatrix}
    \begin{pmatrix}
    p_4\\
    p_{(3,1)}\\
    p_{(2,2)}\\
    p_{(2,1,1)}\\
    p_{(1,1,1,1)}\\
    \end{pmatrix}
\end{equation*}
Note that indices are ordered by the number of parts in a partition, and so our matrix is indeed lower triangular with $1$'s on the diagonal.
\end{example}

\begin{remark}\label{rem:matrix}
The above example shows how Corollary~\ref{cor:wpbas} could have alternatively been proved by an application of Theorem~\ref{the:wbas}.

One proof of the fact that the \finrev{ribbon} Schur functions indexed by partitions form a basis uses a lower triangularity argument with   \eqref{eq:rintoh}, which expands a ribbon Schur function into the basis of complete homogeneous symmetric functions.

As we shall soon see in Remark~\ref{rem:moreU}, the automorphism $U$, which takes each $r_\alpha\mapsto X_{(P_{\ell(\alpha)},\alpha)}$, also takes each $h_\lambda\mapsto p_\lambda$. Under this automorphism, showing \rev{the result of the previous paragraph} is equivalent to showing that the extended chromatic symmetric functions of weighted paths indexed by partitions form a basis via the proof of Theorem~\ref{the:wbas}.
\end{remark}

\section{Neat changes of basis}\label{sec:neatbases}
To work with the chromatic bases of Theorem~\ref{the:chr}, it is important to understand how the classical bases of $\Sym$ expand in the new bases being considered. Crew and Spirkl noted in their proof of \cite[Lemma 3]{CS} that the classical power sum symmetric functions $p_\lambda$ are exactly the extended chromatic symmetric functions $X_{(N_{\ell(\lambda)},\lambda)}$. In particular, the $i$th power sum symmetric function $p_i$ is exactly the extended chromatic symmetric function of a single vertex of weight $i$.

To write the power sum symmetric function $p_i$ in terms of the chromatic symmetric functions of unweighted graphs, we might think to apply Theorem~\ref{the:incexc} on the single vertex of weight $i$, using an expansion onto $i$ independent vertices of equal weight $1$. We would also need an edge set $E'$ connecting the $i$ vertices.

To obtain a tidy formula expressing $p_i$ in the basis $\{X_{G_\lambda}\}_{\lambda\vdash n\ge 0}$ for some nifty family $\{G_n\}_{n\ge 1}$ of unweighted graphs, considering the formula in Theorem~\ref{the:incexc}, it would be desirable if \textit{every} graph on $i$ vertices with edges from a subset of $E'$ were a graph $G_\lambda$ obtained from our family for some partition $\lambda$. This motivates the following definition, which we give in a slightly more general form to allow for the weighted case.

\begin{definition}\label{def:neat}
A nifty family $\{(G_n,w_n)\}_{n\ge1}$ is \textit{neat} if for all $n\ge 1$, for all subsets $S\subseteq E(G_n)$, we have $(G_n-S,w_n)$ is isomorphic to $(G_\lambda,w_\lambda)$ for some partition $\lambda\vdash n$.
\end{definition}

\begin{proposition}\label{prop:neat}
The only neat families of unweighted graphs are the family of paths $\{P_n\}_{n\ge1}$ and the family of stars $\{S_n\}_{n\ge1}$.
\end{proposition}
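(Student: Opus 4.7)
The plan is to force each $G_n$ to be a tree, then use the neatness condition to show that every $4$-vertex connected induced subtree of $G_n$ must coincide with $G_4$, and finally to split on the two possibilities $G_4 \in \{P_4, S_4\}$.

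First, I would prove that every $G_n$ is a tree. Suppose $G_n$ contained a cycle and let $e$ be an edge of that cycle; then $G_n - e$ would be connected on $n$ vertices, so by neatness $G_n - e \cong G_\lambda$ for some $\lambda \vdash n$. Since $G_\lambda$ is connected only when $\lambda = (n)$, this forces $G_n - e \cong G_n$, contradicting that the two graphs have different numbers of edges. Hence each $G_n$ is a tree, and in particular $G_1 = P_1$, $G_2 = P_2$, and $G_3 = P_3$ are forced as the unique trees on $1$, $2$, and $3$ vertices respectively.

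Next, I would establish that for every $n \ge 4$ and every connected vertex subset $V' \subseteq V(G_n)$ with $|V'| = 4$, the induced subtree $G_n[V']$ is isomorphic to $G_4$. To do so, take $S$ to consist of every edge of $G_n$ with one endpoint in $V'$ and the other in $V(G_n) \setminus V'$, together with every edge of $G_n$ lying entirely in $V(G_n) \setminus V'$. The components of $G_n - S$ are then precisely $G_n[V']$ together with the $n - 4$ isolated vertices of $V(G_n) \setminus V'$, and neatness forces the unique $4$-vertex component $G_n[V']$ to be isomorphic to $G_4$.

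Since the only trees on four vertices are $P_4$ and $S_4$, two cases remain. If $G_4 = P_4$ then no $G_n$ has an induced $S_4$; since any vertex of degree $\ge 3$ in a tree $G_n$ together with three of its neighbors would induce an $S_4$, it follows that $G_n$ has maximum degree at most $2$, so $G_n = P_n$. If instead $G_4 = S_4$ then no $G_n$ has an induced $P_4$; as any path of length three in a tree would give an induced $P_4$ on its four vertices, this forces $G_n$ to have diameter at most $2$, so $G_n = S_n$. A direct check confirms that both $\{P_n\}_{n \ge 1}$ and $\{S_n\}_{n \ge 1}$ are genuinely neat, completing the proof. The main obstacle is the isolation step above: one must choose $S$ carefully so that $G_n[V']$ is realized as a single component while the remaining components are all legitimate family members, which reduces the global classification problem to a purely local question about $4$-vertex subtrees once we notice that the leftover vertices conveniently become isolated $G_1$'s.
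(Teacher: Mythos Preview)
Your proof is correct and follows essentially the same approach as the paper: show each $G_n$ is a tree, use neatness to force every connected $4$-vertex induced subgraph to be $G_4$, and then split on $G_4\in\{P_4,S_4\}$ via the maximum-degree/longest-path dichotomy. Your isolation step (the explicit choice of $S$) is spelled out more carefully than in the paper, and your organization---fixing $G_4$ first and deducing all $G_n$ from it---is slightly cleaner than the paper's route of first showing each $G_n\in\{P_n,S_n\}$ and then arguing global consistency, but the underlying ideas are identical.
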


\begin{proof}
First observe that the family of paths $\{P_n\}_{n\ge1}$ and the family of stars $\{S_n\}_{n\ge1}$ are neat.
Now note that any neat family $\{G_n\}_{n\ge 1}$ of unweighted graphs must consist entirely of trees. To see this, consider any $G_n$ and let $\epsilon$ be any edge of $G_n$. By Definition~\ref{def:neat}, $G_n-\epsilon$ must be isomorphic to $G_\lambda$ for some $\lambda\vdash n$. The graphs $G_n-\epsilon$ and $G_n$ cannot be isomorphic, since they have different numbers of edges. Hence $G_n-\epsilon$ must be isomorphic to $G_\lambda$ for some   partition $\lambda$ satisfying $\ell(\lambda)>1$. In particular, $G_\lambda$ has more than one connected component, and so must be disconnected. Since $G_n$ is a connected graph such that the deletion of any edge $\epsilon$ disconnects it, $G_n$ must be a tree, by definition.

The only trees on $1$, $2$, and $3$ vertices, respectively are $P_1=S_1$, $P_2=S_2$, and $P_3=S_3$, up to isomorphism. The two nonisomorphic trees on $4$ vertices are $P_4$ and $S_4$.

Let $n\geq 4$, for ease of notation denote $G_n$ by $T$, and let
$$m= \min \{ \maxdeg(T), \width(T) \}$$ where $\maxdeg$ is the maximum degree and $\width$ is the length of the longest path in $T$. If $m\geq 3$ then consider the subgraph \rev{$T'$ of $T$} induced by the vertex of maximum degree and 3 of its neighbours, that is $T'= S_4$. Now consider the subgraph of \rev{$T''$ of $T$} induced by the first $4$ vertices on a path of longest length, that is $T'' = P_4$. If we have  a neat family of graphs then by   definition  it follows that $$S_4 =T'= G_4 = T''= P_4$$\SvW{giving us a contradiction.}

Hence $m\leq 2$, which implies that either $\maxdeg(T) \leq 2$, in which case $G_n$    is a path, or $\width(T) \leq 2$ in which case $G_n$    is a star. Considering our family is neat, it follows by definition that if $G_n = P_n$ then $G_i = P_i$ for all $i<n$, and if $G_n = S_n$ then $G_i = S_i$ for all $i<n$. Finally, let $n$ tend to infinity.

Thus any neat family of unweighted graphs $\{G_n\}_{n\ge1}$ must either be the family of paths $\{P_n\}_{n\ge1}$ or the family of stars $\{S_n\}_{n\ge1}$. 
\end{proof}

We now proceed with our plan to expand the power sum symmetric functions in terms of the chromatic bases generated by neat families of unweighted graphs via Theorem~\ref{the:incexc}. However, we first note that Theorem~\ref{the:incexc} is rather similar in form to the formula in Proposition~\ref{prop:pwr}. This similarity gives us something neat:

\begin{theorem}[Chromatic reciprocity] \label{the:rec}
Let $\{(G_n,w_n)\}_{n\ge1}$ be a neat family. Then the unique linear transformation $\varphi:\Sym\to\Sym$ mapping $p_\lambda\mapsto X_{(G_\lambda,w_\lambda)}$ for each partition $\lambda$ is exactly the unique linear transformation mapping $X_{(G_\lambda,w_\lambda)}\mapsto p_\lambda$ for each partition $\lambda$. In particular, $\varphi$ is an involutory automorphism of $\Sym$ as a graded algebra.
\end{theorem}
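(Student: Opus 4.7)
The plan is to first establish that $\varphi$ (defined by $p_\lambda\mapsto X_{(G_\lambda,w_\lambda)}$) is a well-defined graded algebra automorphism, and then prove $\varphi^2=\mathrm{id}$ by exhibiting matching inclusion-exclusion expansions of $X_{(G_n,w_n)}$ and $p_n$. The automorphism claim is essentially formal: both $\{p_\lambda\}_{\lambda\vdash n\ge0}$ and, by Theorem~\ref{the:wbase}, $\{X_{(G_\lambda,w_\lambda)}\}_{\lambda\vdash n\ge0}$ are multiplicative bases of $\Sym$, with $p_\lambda=\prod_i p_{\lambda_i}$ and (by Proposition~\ref{prop:wprod}) $X_{(G_\lambda,w_\lambda)}=\prod_i X_{(G_{\lambda_i},w_{\lambda_i})}$, so $\varphi$ is uniquely determined as the graded algebra automorphism sending $p_i\mapsto X_{(G_i,w_i)}$. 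Granting this, the theorem statement (that $\varphi$ also sends $X_{(G_\lambda,w_\lambda)}\mapsto p_\lambda$) is equivalent to $\varphi^2=\mathrm{id}$, which by multiplicativity reduces to proving $\varphi(X_{(G_n,w_n)})=p_n$ for every $n\ge1$.

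To verify the latter, I will produce two dual expansions. On one side, Proposition~\ref{prop:pwr} gives
$$X_{(G_n,w_n)}=\sum_{S\subseteq E(G_n)}(-1)^{|S|}\,p_{\lambda(S)},$$
where $\lambda(S)\vdash n$ is the partition whose parts are the weight-sums of the connected components of $(V(G_n),S,w_n)$. By neatness, $(V(G_n),S,w_n)\cong(G_{\lambda(S)},w_{\lambda(S)})$ as weighted graphs. Applying $\varphi$ yields
$$\varphi(X_{(G_n,w_n)})=\sum_{S\subseteq E(G_n)}(-1)^{|S|}\,X_{(G_{\lambda(S)},w_{\lambda(S)})}.$$

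On the other side, I recognize $p_n=X_{(N_1,(n))}$ (a single vertex of weight $n$) and apply Theorem~\ref{the:incexc} with $(G,\alpha)=(N_1,(n))$, expansion $(H,\beta)=(N_{\ell(w_n)},w_n)$, and $E'=E(G_n)$. The hypothesis is satisfied because the only equivalence class of $R$ is $R(v)=V(H)$, and since $G_n=(V(H),E')$ is connected, its unique connected component is likewise $V(H)$. The theorem then produces
$$p_n=\sum_{S\subseteq E(G_n)}(-1)^{|S|}\,X_{(V(G_n),S,w_n)}=\sum_{S\subseteq E(G_n)}(-1)^{|S|}\,X_{(G_{\lambda(S)},w_{\lambda(S)})},$$
using neatness again. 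Comparing the two displays gives $\varphi(X_{(G_n,w_n)})=p_n$, completing the argument.

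The one substantive obstacle is making sure that the same partition $\lambda(S)$ simultaneously labels the corresponding summands in both expansions; this hinges on the defining property of neatness, together with the immediate observation that the weighted-component partition of $(G_{\lambda(S)},w_{\lambda(S)})$ is exactly $\lambda(S)$ (as each $G_{\lambda(S)_i}$ is connected with weight-sum $\lambda(S)_i$). Everything else is formal.
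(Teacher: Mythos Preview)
Your proof is correct and follows essentially the same route as the paper: both expand $X_{(G_n,w_n)}$ via Proposition~\ref{prop:pwr}, expand $p_n$ via Theorem~\ref{the:incexc} applied to the single weight-$n$ vertex with expansion $(V(G_n),\emptyset,w_n)$ and $E'=E(G_n)$, and use neatness to identify $(V(G_n),S,w_n)\cong(G_{\lambda(S)},w_{\lambda(S)})$ so that the two sums coincide after applying $\varphi$. The only difference is the order of exposition (you set up $\varphi$ as a multiplicative automorphism first, the paper does it last), and your notation $N_{\ell(w_n)}$ for the null graph on $|V(G_n)|$ vertices is a mild abuse but clear in context.
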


\begin{proof}
By Proposition~\ref{prop:pwr}, for each $n\ge 1$,
\begin{equation*}
    X_{(G_n,w_n)} = \sum_{S\subseteq E(G_n)} (-1)^{|S|}p_{\lambda((V(G_n),S),w_n)}.
\end{equation*}

Now consider the single vertex of weight $n$, which has extended chromatic symmetric function $p_n$. The weighted graph $((V(G_n),\emptyset),w_n)$ with no edges is an expansion of the single weighted vertex, and the edge set $E(G_n)$ connects the vertices of the expansion. By Theorem~\ref{the:incexc}, we obtain
\begin{equation*}
    p_n=\sum_{S\subseteq E(G_n)} (-1)^{|S|} X_{((V(G_n),S),w_n)}.
\end{equation*}

Since $\{(G_n,w_n)\}_{n\ge 1}$ is a neat family, for each subset $S\subseteq E(G_n)$, we must have that $((V(G_n),S),w_n)$ is the weighted graph $(G_\lambda,w_\lambda)$ for some   partition $\lambda\vdash n$. Since each $(G_\lambda,w_\lambda)$ is the disjoint union of connected weighted graphs of total weight $\lambda_i$ for each part of $\lambda$, the only possibility for this partition is $\lambda((V(G_n),S),w_n)$, whose parts are the sums of the vertex weights of each connected component of $((V(G_n),S),w_n)$.

Hence if 
\begin{equation*}
    X_{(G_n,w_n)}=\sum_{\lambda\vdash n} c_\lambda p_\lambda
\end{equation*}
we must identically have 
\begin{equation*}
    p_n = \sum_{\lambda\vdash n} c_\lambda X_{(G_\lambda,w_\lambda)}
\end{equation*}
with the same coefficients $c_\lambda$.

Let $\varphi:\Sym\to\Sym$ be the unique linear map taking each $p_\lambda \mapsto X_{(G_\lambda,w_\lambda)}$. Since $\{p_\lambda\}_{\lambda\vdash n\ge 0}$ and $\{X_{(G_\lambda,w_\lambda)}\}_{\lambda\vdash n\ge 0}$ are multiplicative bases of $\Sym$ by definition and Theorem~\ref{the:wbase} respectively, the map $\varphi$ is an automorphism of $\Sym$ as a graded algebra.

For each $n\ge 1$, if we apply $\varphi$ to both sides of our expansion of $X_{(G_n,w_n)}$ into the power sum symmetric functions, we find that
\begin{equation*}
    \varphi(X_{(G_n,w_n)})=\sum_{\lambda\vdash n}c_\lambda X_{(G_\lambda,w_\lambda)}=p_n.
\end{equation*}

Since $\varphi$ is an automorphism of $\Sym$ as a graded algebra, it respects multiplication, and therefore takes each $X_{(G_\lambda,w_\lambda)}\mapsto p_\lambda$. That is, $\varphi$ is an involution on $\Sym$.
\end{proof}

We can now use chromatic reciprocity to deduce change of basis formulae. Note that   \eqref{eq:pathtop} is also implicit in the work of  Chmutov, Duzhin and Lando \cite{Chmutovetal}.

\begin{proposition}\label{prop:ppwr}
The power sum symmetric functions expand into the basis generated by the neat family of paths $\{P_n\}_{n\ge1}$ via
\begin{equation*} 
    p_\lambda = \sum_{\alpha\preccurlyeq\lambda} (-1)^{|\lambda| - \ell(\alpha)} X_{P_{\widetilde\alpha}}
\end{equation*}
and similarly
\begin{equation*}\label{eq:pathp}
    X_{P_\lambda} = \sum_{\alpha\preccurlyeq\lambda} (-1)^{|\lambda| - \ell(\alpha)} p_{\widetilde\alpha}.
\end{equation*}\end{proposition}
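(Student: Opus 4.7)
The plan is to first establish the second equality
$$X_{P_\lambda} = \sum_{\alpha \preccurlyeq \lambda} (-1)^{|\lambda| - \ell(\alpha)} p_{\widetilde{\alpha}}$$
directly from Proposition~\ref{prop:pwr}, and then derive the first equality from it via chromatic reciprocity.

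For the direct computation, I apply Proposition~\ref{prop:pwr} to the unweighted disjoint union $P_\lambda = P_{\lambda_1} \mid \cdots \mid P_{\lambda_{\ell(\lambda)}}$. The subsets $S \subseteq E(P_\lambda)$ stand in natural bijection with the compositions $\alpha$ refining $\lambda$: each such $\alpha$ records, in the order inherited from $P_\lambda$, the sizes of the connected components of the spanning subgraph $(V(P_\lambda), S)$. Because $P_\lambda$ is a forest, so is $(V(P_\lambda), S)$, and hence $|S| = |V(P_\lambda)| - \ell(\alpha) = |\lambda| - \ell(\alpha)$. Moreover each connected component of $(V(P_\lambda), S)$ is itself a path, so the partition $\lambda((V(P_\lambda),S))$ is exactly $\widetilde{\alpha}$. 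Substituting these observations into Proposition~\ref{prop:pwr} yields the desired formula for $X_{P_\lambda}$.

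For the first equality, by Proposition~\ref{prop:neat} the family $\{P_n\}_{n \ge 1}$ is neat, so Theorem~\ref{the:rec} provides an involutory automorphism $\varphi$ of $\Sym$ with $\varphi(p_\mu) = X_{P_\mu}$ and $\varphi(X_{P_\mu}) = p_\mu$ for every partition $\mu$. Applying $\varphi$ term by term to the identity just proved gives
$$p_\lambda = \varphi(X_{P_\lambda}) = \sum_{\alpha \preccurlyeq \lambda} (-1)^{|\lambda| - \ell(\alpha)} \varphi(p_{\widetilde{\alpha}}) = \sum_{\alpha \preccurlyeq \lambda} (-1)^{|\lambda| - \ell(\alpha)} X_{P_{\widetilde{\alpha}}},$$
which is the first equality.

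There is no serious obstacle here: the combinatorial input is just the bijection between subsets of $E(P_\lambda)$ and refinements of $\lambda$, and chromatic reciprocity then swaps the two formulas for free. The one place to be careful is the sign computation $|S| = |\lambda| - \ell(\alpha)$, which rests on the fact that a spanning subgraph of a forest has (vertices) minus (edges) many connected components.
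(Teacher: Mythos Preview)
Your proof is correct and follows essentially the same approach as the paper: establish the $X_{P_\lambda}$ expansion via Proposition~\ref{prop:pwr}, then invoke chromatic reciprocity (Theorem~\ref{the:rec}) for paths to obtain the $p_\lambda$ expansion. The only cosmetic difference is that the paper first derives the single-path case $X_{P_n}=\sum_{\beta\vDash n}(-1)^{n-\ell(\beta)}p_{\widetilde\beta}$ and then multiplies over the parts of $\lambda$, whereas you apply Proposition~\ref{prop:pwr} directly to the disjoint union $P_\lambda$ and read off the bijection between edge subsets and refinements of $\lambda$ in one step; the underlying combinatorics is identical.
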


\begin{proof}
Recall that if $G$ is a graph with $n$ vertices then $X_{(G, (1^n))}=X_G$. Hence by Proposition~\ref{prop:pwr} we have that
\begin{equation}\label{eq:pathtop}
X_{P_n}= X_{(P_n, (1^n))}= \sum _{S\subseteq E(G)} (-1)^{|S|} p _{\lambda((V(P_n),S), (1^n))} = \sum _{\beta \vDash n} (-1) ^{n-\ell(\beta)} p _{\widetilde\beta}.
\end{equation}

We can compute the expansion of $X_{P_\lambda}$ into the power sum symmetric functions by multiplying the expansions of each $X_{P_{\lambda _i}}$. We obtain
\begin{align*}
    X_{P_\lambda}&= X_{P_{\lambda_1}}\cdots X_{P_{\lambda_{\ell(\lambda)}}}\\
    &=\prod_{i=1}^{\ell(\lambda)}\left(\sum _{\beta \vDash \lambda_i} (-1) ^{\lambda_i-\ell(\beta)} p _{\widetilde\beta}\right)\\
    &=\sum_{\alpha\preccurlyeq\lambda} (-1)^{|\lambda| - \ell(\alpha)} p_{\widetilde\alpha}.
\end{align*}

Since the family of paths is neat, by Theorem~\ref{the:rec} we also have  
\begin{equation*}
   p_\lambda = \sum_{\alpha\preccurlyeq\lambda} (-1)^{|\lambda| - \ell(\alpha)} X_{P_{\widetilde\alpha}}.\end{equation*}
\end{proof}

\begin{remark}\label{rem:moreU}
The  \finrev{formula} expressing the complete homogeneous symmetric function $h_\lambda$ in terms of the elementary symmetric functions is given by \rev{\cite[Definition 3.2.6]{LMvW}}
\begin{equation*}
    h_\lambda = \sum_{\alpha\preccurlyeq\lambda} (-1)^{|\lambda| - \ell(\alpha)} e_{\widetilde\alpha}.
\end{equation*}

Thus the linear map $U$, introduced in \eqref{eq:Umap}, which takes each $e_\lambda\mapsto X_{P_\lambda}$, also takes $h_\lambda\mapsto p_\lambda$ for each partition $\lambda$ by Proposition~\ref{prop:ppwr}. Applying $U$ to   \eqref{eq:rintoh}, and recalling   \eqref{eq:rtoXP}, we obtain
\begin{equation*}
    X_{(P_{\ell(\alpha)},\alpha)} = \sum_{\beta \succcurlyeq \alpha} (-1)^{\ell(\alpha)-\ell(\beta)} p_{\widetilde \beta}.
\end{equation*}

It is perhaps easier, but  less instructive, to deduce the relationship between the ribbon Schur functions and the extended chromatic symmetric functions of weighted functions from the above formula, which could have been found via an application of Proposition~\ref{prop:pwr}  {in a way similar to our proof of Proposition~\ref{prop:ppwr}}.

Also note that we can deduce the linear involution taking each $X_{P_\lambda}\mapsto p_\lambda$. The involution must be the map $U\omega U^{-1}$, where $\omega$ is the linear involution on $\Sym$ taking each $e_\lambda\mapsto h_\lambda$, since   $$p_\lambda = U(h_\lambda)=U\omega(e_\lambda)=U\omega U^{-1}(X_{P_\lambda}).$$
\end{remark}

In \cite[Theorem 8]{ChovW} of Cho and van Willigenburg's original paper introducing chromatic bases, they computed an expansion of the path basis into the power sum symmetric functions, which appears in a different form from Proposition~\ref{prop:ppwr}. Because the power sum symmetric functions are linearly independent, the expansions must ultimately be the same.

Another expansion they computed was the expansion of the star basis into the power sum symmetric functions. Specifically, they found in \cite[Theorem 8]{ChovW} for $n+1\geq 1$ that
\begin{equation*}
    X_{S_{n+1}} = \sum_{r=0}^{n} (-1)^{r} {n\choose r} p_{(r+1,1^{n-r})}.
\end{equation*}

For our purposes, we will consider the equivalent form for $n\ge 1$,
\begin{equation*}
    X_{S_n} = \sum_{r=1}^{n} (-1)^{r-1} {n-1\choose r-1} p_{(r,1^{n-r})}.
\end{equation*}

We can compute an expansion for $X_{S_\lambda}$ by multiplying expansions of the above form for each part of $\lambda$. Since the stars form a neat family, this also gives an expansion of the power sum symmetric functions in terms of the star basis by Theorem~\ref{the:rec} as follows.

\begin{proposition}\label{prop:spwr}
The power sum symmetric functions expand into the basis generated by the neat family of stars $\{S_n\}_{n\ge1}$ via 
\begin{equation*}
    p_\lambda = \sum_{\alpha \subseteq \lambda} (-1)^{|\alpha|-\ell(\lambda)} {\lambda_1-1\choose\alpha_1-1}\cdots{\lambda_{\ell(\lambda)}-1\choose\alpha_{\ell(\lambda)}-1} X_{S_{\widetilde{\alpha}\cdot(1^{|\lambda|-|\alpha|})}}
\end{equation*}
and similarly
\begin{equation*}
    X_{S_\lambda} = \sum_{\alpha \subseteq \lambda} (-1)^{|\alpha|-\ell(\lambda)} {\lambda_1-1\choose\alpha_1-1}\cdots{\lambda_{\ell(\lambda)}-1\choose\alpha_{\ell(\lambda)}-1} p_{\widetilde{\alpha}\cdot(1^{|\lambda|-|\alpha|})}
\end{equation*}
\rev{where $\alpha \subseteq \lambda$ means $\ell(\alpha)=\ell(\lambda)$ and $\alpha_1\le \lambda_1,\dots,\alpha_{\ell(\lambda)}\le \lambda_{\ell(\lambda)}$.}
\end{proposition}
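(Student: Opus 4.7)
\medskip

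\noindent\textbf{Proof proposal.} The plan is to mirror the argument used for Proposition~\ref{prop:ppwr}: multiply out the known single-star expansion and then invoke Theorem~\ref{the:rec} (chromatic reciprocity) to swap sides.

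First I would take the single-part formula recalled just before the statement,
\begin{equation*}
    X_{S_n} = \sum_{r=1}^{n} (-1)^{r-1} \binom{n-1}{r-1} p_{(r,1^{n-r})},
\end{equation*}
and use Proposition~\ref{prop:wprod} (equivalently \eqref{eq:Glambda}) together with the fact that the stars are nifty to write
\begin{equation*}
    X_{S_\lambda} \;=\; \prod_{i=1}^{\ell(\lambda)} X_{S_{\lambda_i}} \;=\; \prod_{i=1}^{\ell(\lambda)} \sum_{r_i=1}^{\lambda_i} (-1)^{r_i-1}\binom{\lambda_i-1}{r_i-1} p_{(r_i,1^{\lambda_i-r_i})}.
\end{equation*}
Expanding the product, each choice of exponents $(r_1,\dots,r_{\ell(\lambda)})$ with $1\le r_i\le \lambda_i$ corresponds to a composition $\alpha=(r_1,\dots,r_{\ell(\lambda)})$ with $\ell(\alpha)=\ell(\lambda)$ and $\alpha_i\le\lambda_i$; that is, $\alpha\subseteq\lambda$ in the sense of the statement.

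Next I would collect the data contributed by such an $\alpha$. The sign is $\prod_i(-1)^{r_i-1}=(-1)^{|\alpha|-\ell(\lambda)}$, the coefficient is the binomial product $\binom{\lambda_1-1}{\alpha_1-1}\cdots\binom{\lambda_{\ell(\lambda)}-1}{\alpha_{\ell(\lambda)}-1}$, and, because power sum symmetric functions are multiplicative on partitions, the product of the $p_{(r_i,1^{\lambda_i-r_i})}$ equals $p_\mu$ where $\mu$ is the partition whose parts are all the $\alpha_i$ together with $\sum_i(\lambda_i-\alpha_i)=|\lambda|-|\alpha|$ copies of $1$; that is, $\mu=\widetilde{\alpha}\cdot(1^{|\lambda|-|\alpha|})$. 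Assembling these yields exactly the second displayed identity of Proposition~\ref{prop:spwr}.

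Finally, by Proposition~\ref{prop:neat} the star family $\{S_n\}_{n\ge 1}$ is neat, so Theorem~\ref{the:rec} provides an involutory automorphism $\varphi$ of $\Sym$ satisfying $\varphi(p_\lambda)=X_{S_\lambda}$ and $\varphi(X_{S_\lambda})=p_\lambda$. Applying $\varphi$ to the expansion of $X_{S_\lambda}$ just derived interchanges the roles of $p_\mu$ and $X_{S_\mu}$ without altering the scalar coefficients, producing the first displayed identity. I do not anticipate a genuine obstacle: the only bookkeeping point requiring care is to verify that the unordered product of the compositions $(r_i,1^{\lambda_i-r_i})$ really does reorganize to $\widetilde{\alpha}\cdot(1^{|\lambda|-|\alpha|})$, which is immediate once one notes that sorting the concatenated parts moves all the $1$'s to the end.
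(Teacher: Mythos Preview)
Your proposal is correct and follows essentially the same route as the paper: multiply the single-star expansions $X_{S_{\lambda_i}}$ together, reindex the product over compositions $\alpha\subseteq\lambda$ to obtain the $X_{S_\lambda}$ formula, and then invoke neatness of stars together with Theorem~\ref{the:rec} to deduce the $p_\lambda$ formula. The only cosmetic difference is that you phrase the last step as applying the involution $\varphi$ explicitly, whereas the paper simply cites Theorem~\ref{the:rec}.
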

\begin{proof}
We can compute the expansion of $X_{S_\lambda}$ into the power sum symmetric functions by multiplying the expansions of each $X_{S_{\lambda_i}}$. We obtain
\begin{align*}
    X_{S_\lambda}&= X_{S_{\lambda_1}}\cdots X_{S_{\lambda_{\ell(\lambda)}}}\\
    &=\prod_{i=1}^{\ell(\lambda)}\left(\sum_{\alpha_i=1}^{\lambda_i} (-1)^{\alpha_i-1} {\lambda_i-1\choose \alpha_i-1} p_{(\alpha_{i},1^{\lambda_{i}-\alpha_{i}})}\right)\\
    &=\sum_{\alpha\subseteq\lambda}\left(\prod_{i=1}^{\ell(\lambda)}(-1)^{\alpha_i-1} {\lambda_i-1\choose \alpha_i-1}p_{(\alpha_{i},1^{\lambda_{i}-\alpha_{i}})}\right)
\end{align*}
since summing over all tuples $(\alpha_1,\dots,\alpha_{\ell(\lambda)})$ of positive integers satisfying $\alpha_1\le \lambda_1,\dots,\alpha_{\ell(\lambda)}\le \lambda_{\ell(\lambda)}$ is the same as summing over all compositions $\alpha$ contained in $\lambda$.

Expanding out the product in each term of the summation gives us
\begin{equation*}
    X_{S_\lambda} = \sum_{\alpha \subseteq \lambda} (-1)^{|\alpha|-\ell(\lambda)} {\lambda_1-1\choose\alpha_1-1}\cdots{\lambda_{\ell(\lambda)}-1\choose\alpha_{\ell(\lambda)}-1} p_{\widetilde{\alpha}\cdot(1^{|\lambda|-|\alpha|})}.
\end{equation*}
Since the  family of stars is neat, by Theorem~\ref{the:rec} we also have  
\begin{equation*}
    p_\lambda = \sum_{\alpha \subseteq \lambda} (-1)^{|\alpha|-\ell(\lambda)} {\lambda_1-1\choose\alpha_1-1}\cdots{\lambda_{\ell(\lambda)}-1\choose\alpha_{\ell(\lambda)}-1} X_{S_{\widetilde{\alpha}\cdot(1^{|\lambda|-|\alpha|})}}.
\end{equation*}
\end{proof}

\section{Composition of graphs and equality of weighted graphs}\label{sec:compgra}
In \cite{CS} Crew and Spirkl stated that they did not know of two weighted trees with the same extended chromatic symmetric function that were nonisomorphic. \SvW{In \cite{ACSZ} they, with Aliste-Prieto and Zamora, found such a pair.} \rev{Independently, the authors of this article found a different pair, below. Note that the two graphs are nonisomorphic because in the one on the left the two vertices of degree 3 are not adjacent, whereas they are in the one on the right.}

\begin{center}
\begin{tikzpicture}
\def \n {5}
\def \radius {1cm}
\def\ray{{1,2,3,1,2}}

\foreach \s in {1,...,\n}
{
  \node[draw, circle] (\s) at ({360/\n * (\s - 1)}:\radius) {\pgfmathparse{\ray[\s-1]} \pgfmathresult};
}
\node[draw, circle] (a) at ({360/\n * (1}:2cm) {1};
\node[draw, circle] (b) at ({360/\n * (2}:2cm) {1};
\node[draw, circle] (c) at ({360/\n * (4}:2cm) {1};
\path [-] (2) edge (3);
\path [-] (2) edge (a);
\path [-] (3) edge (4);
\path [-] (3) edge (b);
\path [-] (4) edge (5);
\path [-] (5) edge (1);
\path [-] (5) edge (c);
\end{tikzpicture}
\quad\quad\quad\quad\quad\quad\quad\quad
\begin{tikzpicture}
\def \n {5}
\def \radius {1cm}
\def\ray{{1,2,3,1,2}}

\foreach \s in {1,...,\n}
{
  \node[draw, circle] (\s) at ({360/\n * (\s - 1)}:\radius) {\pgfmathparse{\ray[\s-1]} \pgfmathresult};
}
\node[draw, circle] (a) at ({360/\n * (1}:2cm) {1};
\node[draw, circle] (b) at ({360/\n * (2}:2cm) {1};
\node[draw, circle] (c) at ({360/\n * (4}:2cm) {1};
\path [-] (1) edge (2);
\path [-] (2) edge (3);
\path [-] (2) edge (a);
\path [-] (3) edge (4);
\path [-] (3) edge (b);
\path [-] (5) edge (1);
\path [-] (5) edge (c);
\end{tikzpicture}
\end{center}

\rev{As we will see in Theorem~\ref{the:eq}, the} equality of extended chromatic symmetric functions of the two weighted trees described above can be deduced from a more general construction of families of weighted graphs with equal extended chromatic symmetric functions, and is a generalization of the binary operation $\circ$.

\begin{definition}\label{def:composeGraph}
Let $(G,w)$ be a weighted graph with distinguished (not necessarily distinct) vertices $a$ and $z$. Given a nonempty   composition $\alpha$, we define the weighted graph $\alpha \circ (G,w)$ as follows.

Consider the disjoint union of $|\alpha|$ copies of $(G,w)$, and let the copies of $a$ and $z$ in the $i$th copy of $(G,w)$ be labelled $a_i$ and $z_i$. Add an edge $z_ia_{i+1}$ for each $i\in [|\alpha|-1]$. Then $\alpha\circ (G,w)$ denotes the weighted graph resulting from contracting the edges $z_ia_{i+1}$ for all $i\in\text{set}(\alpha^c)$.
\end{definition}

\begin{example}\label{ex:comp}
Let $(G,w)$ be the weighted path $(P_3,(1,2,1))$. Choose $a$ to be either vertex of weight $1$ and $z$ to be the vertex of weight $2$. Let $\alpha$ be the composition $(1,2)$. Below is the result of adding in the edges $z_ia_{i+1}$ to the disjoint union of $3$ copies of $(G,w)$.
\begin{center}
\begin{tikzpicture}
\node[shape=circle,draw=black] (A) at (1,0) {1};
\node[shape=circle,draw=black] (B) at (2,0) {2};
\node[shape=circle,draw=black] (C) at (3,0) {1};
\node[shape=circle,draw=black] (D) at (4,0) {2};
\node[shape=circle,draw=black] (E) at (5,0) {1};
\node[shape=circle,draw=black] (F) at (6,0) {2};
\node[shape=circle,draw=black] (G) at (2,1) {1};
\node[shape=circle,draw=black] (H) at (4,1) {1};
\node[shape=circle,draw=black] (I) at (6,1) {1};
\path [-] (A) edge (B);
\path [-] (B) edge (C);
\path [-] (B) edge (G);
\path [-] (D) edge (C);
\path [-] (D) edge (E);
\path [-] (D) edge (H);
\path [-] (F) edge (E);
\path [-] (F) edge (I);
\node [] at (1,-.6) {$a_1$};
\node [] at (2,-.6) {$z_1$};
\node [] at (3,-.6) {$a_2$};
\node [] at (4,-.6) {$z_2$};
\node [] at (5,-.6) {$a_3$};
\node [] at (6,-.6) {$z_3$};
\end{tikzpicture}
\end{center}

Since $\text{set}((1,2)^c)=\{2\}$, we obtain the weighted graph $(1,2)\circ(G,w)$, drawn below, by contracting the edge $z_2a_3$.
\begin{center}
\begin{tikzpicture}
\node[shape=circle,draw=black] (B) at (2,0) {1};
\node[shape=circle,draw=black] (C) at (3,0) {2};
\node[shape=circle,draw=black] (D) at (4,0) {1};
\node[shape=circle,draw=black] (E) at (5,0) {3};
\node[shape=circle,draw=black] (F) at (6,0) {2};
\node[shape=circle,draw=black] (G) at (3,1) {1};
\node[shape=circle,draw=black] (H) at (5,1) {1};
\node[shape=circle,draw=black] (I) at (6,1) {1};
\path [-] (B) edge (C);
\path [-] (C) edge (G);
\path [-] (D) edge (C);
\path [-] (D) edge (E);
\path [-] (E) edge (H);
\path [-] (F) edge (E);
\path [-] (F) edge (I);
\end{tikzpicture}
\end{center}
\end{example}

Note that given any two nonempty compositions $\alpha$ and $\beta$, the elements of $\text{set}((\alpha\odot\beta)^c)$ consist of the elements of $\text{set}((\alpha\cdot\beta)^c)$, in addition to the element $|\alpha|$. Thus by Definition~\ref{def:composeGraph}, the weighted graph $(\alpha\odot\beta)\circ(G,w)$ can be obtained from $(\alpha\cdot\beta)\circ(G,w)$ by contracting the edge $z_{|\alpha|}a_{|\alpha|+1}$. \finrev{If} we instead delete the edge $z_{|\alpha|}a_{|\alpha|+1}$ from $(\alpha\cdot\beta)\circ(G,w)$, we obtain the disjoint union $\alpha\circ(G,w)\cup\beta\circ(G,w)$.

By the deletion-contraction rule of Proposition~\ref{prop:dc}, we then have
\begin{equation*}
    X_{(\alpha\cdot\beta)\circ(G,w)} = X_{\alpha\circ(G,w)\cup\beta\circ(G,w)}- X_{(\alpha\odot\beta)\circ(G,w)} = X_{\alpha\circ(G,w)}X_{\beta\circ(G,w)}- X_{(\alpha\odot\beta)\circ(G,w)}.
\end{equation*}
The similarity between this equation and the product rule for   ribbon Schur functions will allow us to deduce the following theorem. \SvW{The first part of this theorem has also been discovered independently, but this time in the language of $\mathcal{L}$-polynomials \cite{ACSZ}.}

\begin{theorem}\label{the:eq}
Let $(G,w)$ be a weighted graph with distinguished vertices $a$ and $z$. Then for any two nonempty compositions $\alpha$ and $\beta$,
$$X_{\alpha\circ(G,w)} = X_{\beta\circ(G,w)}
\hbox{\rm \quad if \quad}
 \alpha\sim\beta.$$
If moreover the underlying graph $G$ of $(G,w)$ is simple and connected, then this strengthens to $$X_{\alpha\circ(G,w)} = X_{\beta\circ(G,w)}
\hbox{\rm \quad if and only if \quad}
 \alpha\sim\beta.$$
\end{theorem}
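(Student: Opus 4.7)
The approach is to parallel the proof of Theorem~\ref{the:wpclass}: define an algebra endomorphism $T\colon\Sym\to\Sym$ sending $r_\alpha$ to $X_{\alpha\circ(G,w)}$, and then transfer the classification in Theorem~\ref{the:ribbons} across $T$. Since $\{h_n\}_{n\ge 1}$ freely generates $\Sym$ as a polynomial algebra, there is a unique algebra endomorphism $T$ with $T(h_n)=X_{(n)\circ(G,w)}$ for each $n\ge 1$. I claim $T(r_\alpha)=X_{\alpha\circ(G,w)}$ for every nonempty composition $\alpha$, to be proved by strong induction on $\ell(\alpha)$. The base case $\ell(\alpha)=1$ is immediate from $r_{(n)}=h_n$. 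For the inductive step, write $\alpha=(\alpha_1)\cdot\alpha'$ with $\alpha'=(\alpha_2,\dots,\alpha_{\ell(\alpha)})$, and set $\alpha''=(\alpha_1)\odot\alpha'=(\alpha_1+\alpha_2,\alpha_3,\dots,\alpha_{\ell(\alpha)})$. Rearranging \eqref{eq:ribMult} yields $r_\alpha=r_{(\alpha_1)}\,r_{\alpha'}-r_{\alpha''}$, while the deletion-contraction calculation displayed immediately before Theorem~\ref{the:eq} yields the parallel identity $X_{\alpha\circ(G,w)}=X_{(\alpha_1)\circ(G,w)}\,X_{\alpha'\circ(G,w)}-X_{\alpha''\circ(G,w)}$. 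Applying $T$ to the first and invoking the inductive hypothesis on $\alpha'$ and $\alpha''$ (both of length less than $\ell(\alpha)$) recovers the second, proving the claim.

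The forward implication then requires no hypothesis on $G$: if $\alpha\sim\beta$ then Theorem~\ref{the:ribbons} gives $r_\alpha=r_\beta$, so $X_{\alpha\circ(G,w)}=T(r_\alpha)=T(r_\beta)=X_{\beta\circ(G,w)}$.

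For the converse, assume $G$ is simple and connected. It suffices to show $T$ is injective, as then $X_{\alpha\circ(G,w)}=X_{\beta\circ(G,w)}$ forces $r_\alpha=r_\beta$, whence $\alpha\sim\beta$ by Theorem~\ref{the:ribbons}. Writing $m$ for the total weight of $(G,w)$, the endomorphism $T$ carries $\Sym^n$ into $\Sym^{mn}$, so injectivity reduces to injectivity on each $\Sym^n$. For $\lambda\vdash n$, multiplicativity of $T$ together with Proposition~\ref{prop:wprod} gives $T(h_\lambda)=X_{(G_\lambda,w_\lambda)}$, where $(G_\lambda,w_\lambda)$ denotes the disjoint union of the weighted graphs $(\lambda_i)\circ(G,w)$. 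Since $G$ is connected, each $(\lambda_i)\circ(G,w)$ is connected, so $(G_\lambda,w_\lambda)$ has exactly $\ell(\lambda)$ components with total weights $m\lambda_1,\dots,m\lambda_{\ell(\lambda)}$. Now adapt the proof of Theorem~\ref{the:wbase}: by Proposition~\ref{prop:mup} and Stanley's cited M\"{o}bius-positivity inequality, the power sum expansion of $X_{(G_\lambda,w_\lambda)}$ has a unique term of minimal length, of the form $\mu(\hat 0,\hat 1)\,p_{(m\lambda_1,\dots,m\lambda_{\ell(\lambda)})}$, with nonzero coefficient. Distinct partitions $\lambda\vdash n$ produce distinct such partitions of $mn$, so ordering by length gives a triangular change-of-basis matrix with nonzero diagonal entries, establishing linear independence of $\{T(h_\lambda)\}_{\lambda\vdash n}$ and hence injectivity of $T$ on $\Sym^n$.

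The main obstacle lies in the converse: one must carefully adapt the triangularity argument of Theorem~\ref{the:wbase} to the present setting where the image of $T$ in degree $n$ lives in degree $mn$, and verify that the simplicity and connectedness of $G$ (rather than the a priori stronger requirement that $\{(n)\circ(G,w)\}_{n\ge 1}$ be a nifty family) is enough to guarantee connected components of $(G_\lambda,w_\lambda)$ and nonzero M\"{o}bius coefficients, so that the triangularity genuinely separates partitions.
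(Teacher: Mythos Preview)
Your proof is correct and follows the same route as the paper: define the algebra endomorphism sending $h_n\mapsto X_{(n)\circ(G,w)}$, prove by induction on $\ell(\alpha)$ that it sends $r_\alpha\mapsto X_{\alpha\circ(G,w)}$ via the parallel product rules, and transfer Theorem~\ref{the:ribbons}. The only difference is in establishing injectivity for the converse: you adapt the triangularity argument of Theorem~\ref{the:wbase} by hand, whereas the paper simply observes that when $G$ is simple and connected, each $(i)\circ(G,w)$ is simple and connected of total weight $im$, so $\{(i)\circ(G,w)\}_{i\ge1}$ sits inside a nifty family (fill in the missing total weights arbitrarily) and Theorem~\ref{the:wbase} applies directly to give algebraic independence of $\{X_{(i)\circ(G,w)}\}_{i\ge1}$. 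This black-box invocation is a bit cleaner and answers the concern you flagged in your final paragraph.
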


\begin{proof}
We will show that $X_{\alpha\circ(G,w)}$ is the image of the ribbon Schur function $r_\alpha$ under the algebra endomorphism $U_{(G,w)}:\Sym\to\Sym$ mapping $h_i\mapsto X_{(i)\circ(G,w)}$ for each $i\ge 1$.

We proceed by induction on the length $\ell$ of $\alpha$. If $\ell=1$, then $\alpha=(\alpha_1)$ consists of a single part. Then indeed $X_{\alpha\circ(G,w)}=U_{(G,w)}(h_{\alpha_1})=U_{(G,w)}(r_{\alpha})$, so the base case holds.

Now suppose $\alpha$ is of length $\ell\ge 2$ and the inductive hypothesis holds for all compositions of length $<\ell$. Then $$X_{\alpha\circ(G,w)}=X_{(\alpha_1)\circ(G,w)}X_{(\alpha_2,\dots,\alpha_{\ell})\circ(G,w)}-X_{(\alpha_1+\alpha_2,\dots,\alpha_{\ell})\circ(G,w)}$$ by the deletion-contraction rule, Proposition~\ref{prop:dc}. Applying the inductive hypothesis, we obtain
\begin{align*}
    X_{\alpha\circ(G,w)}&=U_{(G,w)}(r_{(\alpha_1)})U_{(G,w)}(r_{(\alpha_2,\dots,\alpha_{\ell})})-U_{(G,w)}(r_{(\alpha_1+\alpha_2,\dots,\alpha_{\ell})})\\
    &=U_{(G,w)}(r_{(\alpha_1)}r_{(\alpha_2,\dots,\alpha_{\ell})}-r_{(\alpha_1+\alpha_2,\dots,\alpha_{\ell})})\\
    &= U_{(G,w)}(r_\alpha)
\end{align*}
since   ribbon Schur functions satisfy the product rule described in   \eqref{eq:ribMult}.

Hence if $\alpha\sim\beta$, then $r_\alpha = r_{\beta}$ and so $$X_{\alpha\circ(G,w)}=U_{(G,w)}(r_\alpha)=U_{(G,w)}(r_\beta)=X_{\beta\circ(G,w)}.$$

When $G$ is simple and connected, the weighted graphs $\{(i)\circ(G,w)\}_{i\ge1}$ are each  {simple and} connected with distinct total weights. \rev{The weighted graphs $\{(i)\circ(G,w)\}_{i\ge1}$ can be seen as a subset of a nifty family, thus by Theorem~\ref{the:wbase} the extended chromatic symmetric functions $\{X_{(i)\circ (G,w)}\}_{i\ge1}$ are algebraically independent.} In that case, $U_{(G,w)}$ is injective and so we also have $X_{\alpha\circ(G,w)}=X_{\beta\circ(G,w)}$ only if $\alpha\sim\beta$.
\end{proof}

\begin{example}\label{ex:eq}
Let $(G,w)$ be the same weighted graph as in Example~\ref{ex:comp}, with the same choice of vertices $a$ and $z$. Since $(1,2)\sim (2,1)$, by Theorem~\ref{the:eq} the weighted graphs $(1,2)\circ (G,w)$ and $(2,1)\circ(G,w)$ below have equal extended chromatic symmetric functions.
\begin{center}
\begin{tikzpicture}
\node[shape=circle,draw=black] (B) at (2,0) {1};
\node[shape=circle,draw=black] (C) at (3,0) {2};
\node[shape=circle,draw=black] (D) at (4,0) {1};
\node[shape=circle,draw=black] (E) at (5,0) {3};
\node[shape=circle,draw=black] (F) at (6,0) {2};
\node[shape=circle,draw=black] (G) at (3,1) {1};
\node[shape=circle,draw=black] (H) at (5,1) {1};
\node[shape=circle,draw=black] (I) at (6,1) {1};
\path [-] (B) edge (C);
\path [-] (C) edge (G);
\path [-] (D) edge (C);
\path [-] (D) edge (E);
\path [-] (E) edge (H);
\path [-] (F) edge (E);
\path [-] (F) edge (I);
\end{tikzpicture}
\quad\quad\quad\quad\quad\quad\quad\quad
\begin{tikzpicture}
\node[shape=circle,draw=black] (B) at (2,0) {1};
\node[shape=circle,draw=black] (C) at (3,0) {3};
\node[shape=circle,draw=black] (D) at (4,0) {2};
\node[shape=circle,draw=black] (E) at (5,0) {1};
\node[shape=circle,draw=black] (F) at (6,0) {2};
\node[shape=circle,draw=black] (G) at (3,1) {1};
\node[shape=circle,draw=black] (H) at (4,1) {1};
\node[shape=circle,draw=black] (I) at (6,1) {1};
\path [-] (B) edge (C);
\path [-] (C) edge (G);
\path [-] (D) edge (C);
\path [-] (D) edge (E);
\path [-] (D) edge (H);
\path [-] (F) edge (E);
\path [-] (F) edge (I);

\end{tikzpicture}
\end{center}
\finrev{This is our example from the start of the section.}
\rev{The example of Aliste-Prieto, Crew, Spirkl and Zamora in \cite{ACSZ} looks very similar, however in their example $(G,w) = (P_3,(1,1,2))$ rather than $(G,w) = (P_3,(1,2,1))$.}
\end{example}

\begin{remark}\label{rem:eq}
Because each $e_i$ is equal to the ribbon Schur function $r_{(1^i)}$, we also see that $U_{(G,w)}$ is the algebra endomorphism taking $e_i\mapsto X_{(1^i)\circ(G,w)}$ for each $i\ge1$. 

When $(G,w)$ is the graph of a single vertex of weight $1$, the map $U_{(G,w)}$ is exactly the automorphism $U:\Sym\to\Sym$ introduced in \eqref{eq:Umap}, which takes each $h_i\mapsto p_i$ and each $e_i\mapsto X_{P_i}$ for $i\ge1$. Applying Theorem~\ref{the:eq} to this case gives Theorem~\ref{the:wpclass}, the classification of equality of extended chromatic symmetric functions of weighted paths.
\end{remark}

As a final note,  we have seen how known linear relations between ribbon Schur functions have given us a plethora of results for weighted paths. This raises the natural question: What linear relations between ribbon Schur functions can we obtain from weighted paths? For example, by applying the deletion-contraction rule of Propositions~\ref{prop:dc} on two different edges of the cycle on 3 vertices from Example~\ref{ex:dc} with vertex weights $3,2,1$, we have $$X_{(P_3,(2,1,3))}-X_{(P_2,(1,5))}=X_{(P_3,(2,3,1))}-X_{(P_2,(3,3))}$$where the edge we do not apply to is the one between the vertices of weights 1 and 3, and so $$r_{(2,1,3)}+r_{(3,3)}=r_{(2,3,1)}+r_{(1,5)}.$$

\section{Acknowledgments}\label{sec:ack}  \SvW{The authors would like to thank Jos\'{e} Aliste-Prieto, Logan Crew, Sophie Spirkl and Jos\'{e} Zamora for helpful conversations.} \rev{They would also like to thank the referee for the care they took with our paper and their very thoughtful comments.}

\section*{Funding}\label{cash}
All authors were supported in part by the National Sciences and Engineering Research Council of \rev{Canada.}

\section*{References}\label{refs}
\bibliographystyle{plain}

\def\cprime{$'$}

\end{document}